\newtheorem{thm}{Theorem}
\newtheorem{cor}{Corollary}
\newtheorem{prop}{Proposition}
\newdefinition{rem}{Remark}
\newdefinition{defi}{Definition}
\newcolumntype{M}[1]{>{\raggedright}m{#1}}
\journal{*****}
\begin{document}

\begin{frontmatter}

\author{Thierry COMBOT\fnref{label2}}
\ead{thierry.combot@u-bourgogne.fr}
\address{IMB, Universi\'e de Bourgogne, 9 avenue Alain Savary, 21078 Dijon Cedex }

\title{Hyperexponential solutions of elliptic difference equations}

\author{}

\address{}

\begin{abstract}
Consider an elliptic curve $\mathcal{C}$ with coefficients in $\mathbb{K}$ with $[\mathbb{K}:\mathbb{Q}]<\infty$ and $\delta \in \mathcal{C}(\mathbb{K})$ a non torsion point. We consider an elliptic difference equation $\sum_{i=0}^l a_i(p) f(p\oplus i.\delta)=0$ with $\oplus$ the elliptic addition law and $a_i$ polynomials on $\mathcal{C}$. We present an algorithm to compute rational solutions, then an intermediary class we call pseudo-rational solutions, and finally hyperexponential solutions, which are functions $f$ such that $f(p\oplus \delta)/f(p)$ is rational over $\mathcal{C}$.
\end{abstract}
\begin{keyword}
Elliptic curves \sep difference equations \sep differential Galois theory
\end{keyword}
\end{frontmatter}

\section{Introduction}

Let us consider an elliptic curve under Weierstrass form $\{(u,v)\in \mathbb{C}^2, v^2=u^3+au+b\}$. This curve can be completed by a point at infinity, noted $O$, and the completed curve will be called $\mathcal{C}$. Remark that this completion point is unique because $\sqrt{u^3+au+b}$ is ramified above $u=\infty$. We can now define on $\mathcal{C}$ an addition law $\oplus$ with the property that $p\oplus q \oplus r=O$ if $p,q,r$ are aligned, and $\ominus p$ is the symmetric with respect of the abscissa of $p$. Any straight line cuts $\mathcal{C}$ at $3$ points (with possibly multiplicity), and thus given any pair of points $p,q$, the point $\ominus r$ and then $r$ can be computed.

A point is called of torsion if an integer multiple of it is $O$, which we will note $N.p=O$ where $N\in\mathbb{N}^*$. Remark in particular that the ramification points of $\mathcal{C}$ are of $2$ torsion. For a given point $\delta \in\mathcal{C}$, the addition of $\delta$ defines an automorphism $\phi_{\delta}$ of $\mathcal{C}$. This automorphism is cyclic if and only if the point $\delta$ is of torsion. In the rest of the article, we will consider a point of $\mathcal{C}$ noted $\delta$ which is not of torsion. In this article we will be interested by difference equations of the form
\begin{equation}\label{eqell}
\sum_{i=0}^l a_i(p) f(p \oplus i.\delta)=0
\end{equation}
where $a_i$ are polynomial functions over $\mathcal{C}$, i.e. elements of $\mathbb{C}[u,v]/(u^3+au+b-v^2)$. These kind of difference equations where considered by Dreyfus, Arreche and al. \cite{dreyfus2015galois}, \cite{arreche2021differential} with applications in mind \cite{dreyfus2019differential}, and can be also written using the standard shift by parametrizing $\mathcal{C}$ with Weierstrass elliptic functions. We are interested in three types of explicit solutions.

\begin{defi}
A solution of \eqref{eqell} is called rational if it is an element of $\hbox{Frac}(\mathbb{C}[u,v]/(u^3+au+b-v^2))$. A solution of\eqref{eqell} is called hyperexponential if it also satisfies an elliptic difference equation of order $1$, i.e. $c_1(p) f(p \oplus \delta)+c_2(p) f(p)=0$ with $c_1,c_2$ polynomials, $(c_1,c_2)\neq (0,0)$. A solution of \eqref{eqell} is called pseudo-rational if it is hyperexponential and admits finitely many poles and roots.
\end{defi}

These definitions have a natural equivalence with the classical case of the shift on $\mathbb{C}$, which gives the classical difference equations, see \cite{hardouin2016galois}. These type of solutions were analysed by \cite{petkovvsek1992hypergeometric} and generalization to systems \cite{abramov2015hypergeometric} with algorithms to compute them. The intermediary class we call pseudo-rational solutions is the equivalent of solutions of the form $\lambda^n F(n),\; F\in\mathbb{C}(n)$ for the shift in $\mathbb{C}$. They indeed are hyperexponential and only have finitely many roots and poles. In the elliptic case, they appear as the main building block for computing hyperexponential solutions.

\begin{thm}\label{thm1}
The algorithms \textsf{RationalSolutions}, \textsf{PseudoRationalSolutions}, \textsf{HyperexponentialSolutions} compute a basis of solutions of the vector space of respectively rational solutions, pseudo-rational solutions and hyperexponential solutions.
\end{thm}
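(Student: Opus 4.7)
The plan is to prove correctness of the three algorithms in the order they appear in the statement, since they are logically nested: \textsf{PseudoRationalSolutions} will call \textsf{RationalSolutions} internally, and \textsf{HyperexponentialSolutions} will call \textsf{PseudoRationalSolutions}. For each of the three I separate the proof into (a) a structural reduction to an explicit finite search space and (b) the linear-algebraic step that finishes it.

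For \textsf{RationalSolutions} I would follow the shift-case template of Abramov--Petkov\v{s}ek. A valuation/orbit analysis at the zeros and poles of the coefficients yields, for any rational solution $f = P/Q$, an explicit finite upper bound on the divisor of $Q$: any pole of $f$ is forced by equation \eqref{eqell} to lie in the forward or backward $\delta$-orbit of a zero of $a_l$ or $a_0$, with bounded multiplicity; since $\delta$ is non-torsion, only finitely many orbit representatives can intervene. Once $Q$ is fixed, the numerator $P$ is bounded in degree by a pole-order estimate at the point at infinity $O$ (the only puncture of $\mathcal{C}$ in the affine chart), reducing the remaining condition to a finite linear system whose solutions are exactly the output of the algorithm.

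For \textsf{PseudoRationalSolutions} the strategy is to factor any such solution $f$ as a finite product of basic hyperexponential objects (elliptic analogues of sigma quotients) supported at finitely many prescribed points of $\mathcal{C}$, times a rational corrector $g \in \mathbb{C}(\mathcal{C})$. The support of each basic factor is constrained to a finite computable set by the divisors of the $a_i$, by the same orbit/divisor argument as in the rational case. For each admissible combination of basic factors, substitution into \eqref{eqell} produces an auxiliary elliptic difference equation whose rational solutions are found by \textsf{RationalSolutions}. For \textsf{HyperexponentialSolutions} I would use an elliptic analogue of the Gosper--Petkov\v{s}ek canonical form on the certificate $r(p) = f(p \oplus \delta)/f(p) \in \mathbb{C}(\mathcal{C})$: decompose $r = c\cdot\phi_\delta^*(h)/h\cdot \tilde r$ where $h \in \mathbb{C}(\mathcal{C})$ is absorbed into a rational prefactor and $\tilde r$ is $\delta$-reduced (no zero of its numerator sits in the forward $\delta$-orbit of a pole of its denominator, up to a bound coming from the $a_i$). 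The finitely many possible $\tilde r$ can be enumerated; for each, the corresponding hyperexponential solution has finite zero/pole support and is therefore pseudo-rational for a suitable auxiliary equation handled by \textsf{PseudoRationalSolutions}.

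The main obstacle is the bounding step common to the three reductions, and particularly the finiteness of the set of candidate $\tilde r$ in the hyperexponential case. In the classical shift on $\mathbb{C}$, a single integer (the dispersion) bounds how far orbit relations can reach; but on $\mathcal{C}$ the $\mathbb{Z}\delta$-orbits are unordered and, since $\delta$ is non-torsion, infinite, so no direct dispersion exists. The finiteness must instead be extracted from the divisor degrees of the $a_i$ on $\mathcal{C}$ combined with the non-torsion hypothesis (which guarantees distinctness of all iterates $i\cdot\delta$ and hence termination of the orbit obstruction arguments after finitely many steps). Once these bounds are in place, each algorithm reduces to a finite list of linear-algebra tasks, and correctness of the basis produced follows routinely.
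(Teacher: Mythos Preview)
Your plan has two genuine gaps, one in each of the last two algorithms, and both are places where the elliptic situation diverges sharply from the shift case.

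\textbf{Pseudo-rational solutions.} You propose to factor $f$ as a finite product of ``basic hyperexponential objects \dots\ supported at finitely many prescribed points of $\mathcal{C}$'', and then, ``for each admissible combination'', reduce to \textsf{RationalSolutions}. This enumeration cannot work: the residual factor that the paper calls $\Theta_{\lambda,\alpha,\beta}$ carries a continuous parameter $(\alpha,\beta)\in\mathcal{C}$ which is \emph{not} constrained to a finite computable set by the divisors of the $a_i$. The point $(\alpha,\beta)$ records the principality defect of the (unknown) divisor of $f$; the universal divisor only bounds the pole locations and multiplicities, and bounds the \emph{number} of zeros, but says nothing about where the zeros sit, so the reduction class of $\operatorname{div}(f)$ modulo principal divisors can be any point of $\mathcal{C}$. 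Likewise the scalar $\lambda\in\mathbb{C}^*$ cannot be read off by any local analysis (there is no fixed point playing the role of $\infty$ in the shift case). The paper's actual mechanism is therefore not an enumeration followed by a call to \textsf{RationalSolutions}: one writes the candidate as $P\,Q^{-1}\,\Theta_{\lambda,\alpha,\beta}$ with $P$ of bounded degree and $Q$ a universal denominator, substitutes, and solves a linear system in the coefficients of $P$ whose entries are \emph{polynomial in the unknown parameters} $\lambda,\alpha,\beta$ (with the curve relation $\beta^2=\alpha^3+a\alpha+b$). The non-linear parameter elimination is the crux; your proposal sidesteps it and so does not terminate.

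\textbf{The dispersion bound.} You correctly flag this as the main obstacle, but your resolution---``the finiteness must be extracted from the divisor degrees of the $a_i$ combined with the non-torsion hypothesis''---is not an algorithm. Divisor degrees alone do not bound how far apart on a $\delta$-orbit two roots of $a_0a_l$ can sit: given $p,q\in\mathcal{C}(\bar{\mathbb{K}})$, you must \emph{decide} whether $p\ominus q=N.\delta$ for some $N\in\mathbb{Z}$ and, if so, bound $|N|$ a priori. The paper does this via the canonical height: since $\hat h(N.\delta)=N^2\hat h(\delta)$ and $\hat h(\delta)>0$ (non-torsion), one gets the explicit bound $|N|\le\lceil(\hat h(p\ominus q)/\hat h(\delta))^{1/2}\rceil$, computable to the needed integer precision. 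Without this (or an equivalent height argument), steps like ``only finitely many orbit representatives can intervene'' are existence statements, not effective ones, and none of the three algorithms is actually an algorithm.

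Your outline for \textsf{RationalSolutions} and the Gosper--Petkov\v{s}ek-style reduction for \textsf{HyperexponentialSolutions} are broadly aligned with the paper (the latter corresponds to the paper's $H$-factor and the combinatorial enumeration of valuation jumps, refined there by a $\delta$-principality test), but they inherit both gaps above.
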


\noindent
The approach will be similar to Petkovšek \cite{petkovvsek1992hypergeometric}, but some differences will appear
\begin{itemize}
\item A fundamental quantity is the dispersion. It corresponds to the integer differences between two roots of $a_la_0$. In the shift case on $\mathbb{C}$, this is just the maximum of integer difference between roots of $a_la_0$. In the elliptic case, this is the difference in a multiple of $\delta$ using the addition law on $\mathcal{C}$. Thus we need to test if a point is a multiple of $\delta$, for which will use the notion of canonical height on elliptic curves.
\item For rational solutions, the poles of a possible solution have to be controlled. On a positive side, the point at infinity $O$ is not special any more as in the classical case of the shift in $\mathbb{C}$, as it is not a fixed point. Thus controlling the poles also gives bounds for the degree. On a negative side, this only gives multiplicity bounds at each point, and not a universal denominator as on $\mathbb{C}$, because all divisors are not principal. This implies that a universal denominator could overshoot the possible pole multiplicities.
\item In the classical case of the shift over $\mathbb{C}$, pseudo-rational solutions differ from rational solutions only by a $\lambda^n$ factor. On elliptic curves, not only such possible factors cannot be recovered a priori using local analysis, but they are in fact not enough. All divisors are not principal, and thus cannot be realized with a rational function. However it appears that they can always be realized by a hyperexponential function.
\item Hyperexponential functions which are not pseudo-rational have infinitely many poles or roots. They are located on finitely many orbits. As in the shift case on $\mathbb{C}$, we can bound the possible order of these poles/roots and then a combinatorial analysis begins: for each possible combination of multiplicities, we divide by a hyperexponential function removing these poles and finitely many subsist. However not all combinations of multiplicities can be removed as a principality condition on a divisor appears. This additional constraint leads to a definition of $\delta$-principality, which is necessary for such combination to be realized. As most divisors are not $\delta$-principal, probably only the ones leading to a hyperexponential will be $\delta$-principal, and thus this combinatorial part can be done faster than in the classical case.
\end{itemize}

Before even beginning, we need to ensure that $\delta$ is not a torsion element. The algorithm \textsf{TorsionOrder} of Combot \cite{combot2021elementary} gives an integer $N$ which is a candidate for the torsion order of $\delta$. If it is zero, then $\delta$ is not of torsion. Else if $\delta$ is of torsion, then its order is $N$. This has then to be checked by performing the computation $N.\delta$. Thus it is possible to decide if the equation is well posed. In the article, for computing on examples, we will always choose the elliptic curve $v^2=u^3+15$ with the shift $\delta=(1,4)$. This curve has nice symmetries and a Mordell Weil group over $\mathbb{Q}$ of rank $2$, and thus has many rational points which are not multiple of $\delta$.

\section{Rational solutions}

The first step is to compute the set of possible singularities for a rational solution. There are finitely many orbit under the action of $\phi_\delta$ which can encounter a zero of $a_la_0$. We however need to know if two roots are on the same orbit. For this we will need to bound a priori the distance between two roots on an orbit of $\phi_\delta$.

\begin{defi}[See Silverman \cite{silverman2009arithmetic}]
The naive height of an algebraic point $(\alpha,\beta)\neq O$ of $\mathcal{C}$ is
$$h(\alpha,\beta)=\frac{1}{d} \sum_{\underset{ \hbox{of }\mathbb{Q}(\alpha)}{ v \hbox{ valuation}}} \ln \max(1,\mid \alpha \mid_v )$$
with $d$ the degree of the minimal polynomial of $\alpha$ over $\mathbb{Q}$, and $h(O)=0$. The canonical height is defined by
$$\hat{h}(p)=\lim_{n\rightarrow \infty} \frac{h(2^n.p)}{4^n}$$
This limit exists and defines $\hat{h}$ which has the following properties
\begin{itemize}
\item $\hat{h}(m.p)=m^2 \hat{h}(p)$
\item $\hat{h}(p\oplus q)+ \hat{h}(p\ominus q)= 2\hat{h}(p)+2\hat{h}(q)$
\item $\hat{h}(p)=0 \Leftrightarrow p$ is of torsion
\item $<p,q>=\tfrac{1}{2}(\hat{h}(p\oplus q)-\hat{h}(p)-\hat{h}(q))$ is a positive symmetric bilinear form.
\end{itemize}
\end{defi}

The definition of $\hat{h}$ is not suitable for its practical computation. This is done through computation of local height functions at many primes. The canonical height of a point is (probably) typically a transcendental number, and can only be computed approximatively. However, this is not a problem for our application, where we need to test if a given point is a multiple of $\delta$.

\begin{prop}\label{propideals}
Let $\mathcal{J}_1,\mathcal{J}_2$ be two different maximal ideals of $\mathbb{K}[u,v]$ containing the equation of $\mathcal{C}$. Then two roots of $\mathcal{J}_1$ cannot be on the same orbit of $\phi_\delta$. If a root of $\mathcal{J}_1$ and a root of $\mathcal{J}_2$ are on the same orbit of $\phi_\delta$, then every root of $\mathcal{J}_1$ has a root of $\mathcal{J}_2$ in its orbit.
\end{prop}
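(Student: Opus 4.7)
The plan is to exploit the fact that a maximal ideal $\mathcal{J}_i$ of $\mathbb{K}[u,v]$ containing the equation of $\mathcal{C}$ corresponds to a single $\mathrm{Gal}(\bar{\mathbb{K}}/\mathbb{K})$-orbit of geometric points on $\mathcal{C}$: its roots are precisely the conjugates over $\mathbb{K}$ of any fixed point lying in its vanishing locus. The crucial ingredient is that $\delta \in \mathcal{C}(\mathbb{K})$ and the group law $\oplus$ are defined over $\mathbb{K}$, so the Galois action commutes with $\phi_\delta$: for every $\sigma \in \mathrm{Gal}(\bar{\mathbb{K}}/\mathbb{K})$ and every $n\in\mathbb{Z}$ one has $\sigma(p\oplus n.\delta)=\sigma(p)\oplus n.\delta$.

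For the first claim, I would argue by contradiction. Suppose $p$ and $q$ are two distinct roots of $\mathcal{J}_1$ lying on the same $\phi_\delta$-orbit, so that $q=p\oplus n.\delta$ for some $n\neq 0$. Since $p$ and $q$ are Galois conjugate, write $q=\sigma(p)$ for some $\sigma \in \mathrm{Gal}(\bar{\mathbb{K}}/\mathbb{K})$. By the commutation above, $\sigma^k(p)=p\oplus kn.\delta$ for every $k\ge 0$. Now the Galois orbit of $p$ is finite, say of cardinality $d$, so $\sigma^d(p)=p$, which forces $dn.\delta=O$. This contradicts the standing hypothesis that $\delta$ is non-torsion.

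For the second claim, let $p$ be a root of $\mathcal{J}_1$ and $p\oplus n.\delta$ a root of $\mathcal{J}_2$ (for some $n\in\mathbb{Z}$). Let $q$ be an arbitrary root of $\mathcal{J}_1$; then $q=\tau(p)$ for some $\tau \in \mathrm{Gal}(\bar{\mathbb{K}}/\mathbb{K})$. Applying $\tau$ to $p\oplus n.\delta$, and using again that $\delta$ is $\mathbb{K}$-rational, gives
\[
\tau(p\oplus n.\delta)=\tau(p)\oplus n.\delta=q\oplus n.\delta.
\]
Since $\tau$ permutes the roots of $\mathcal{J}_2$, the point $q\oplus n.\delta$ is a root of $\mathcal{J}_2$ in the $\phi_\delta$-orbit of $q$, which is what was required.

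The only delicate point is the identification of "roots of $\mathcal{J}_i$" with a single Galois orbit; once one has set up the correspondence between maximal ideals of $\mathbb{K}[u,v]/(v^2-u^3-au-b)$ and closed points of $\mathcal{C}$ over $\mathbb{K}$, the rest is a formal consequence of the $\mathbb{K}$-rationality of $\delta$ and $\oplus$ together with the non-torsion hypothesis on $\delta$. No dispersion bound or height computation is needed at this purely algebraic stage; these will enter later, when one has to decide algorithmically whether two ideals actually fall into the same $\phi_\delta$-orbit.
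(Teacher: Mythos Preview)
Your argument is correct and rests on the same idea as the paper's proof: the roots of a maximal ideal over $\mathbb{K}$ form a single Galois orbit, the map $\phi_\delta$ is Galois-equivariant because $\delta\in\mathcal{C}(\mathbb{K})$, and the non-torsion hypothesis then rules out any coincidence along a $\phi_\delta$-orbit. Your execution of the first claim is in fact tidier than the paper's: the paper builds a chain $\alpha_1,\alpha_2,\alpha_3,\dots$ of conjugates along the orbit, argues that the Galois action becomes cyclic, and only then closes the loop to get $(nN).\delta=O$; you go straight to iterating a single $\sigma$ and reading off $\sigma^k(p)=p\oplus kn.\delta$.

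One small imprecision: from the Galois orbit having cardinality $d$ it does not literally follow that $\sigma^d(p)=p$, since $\sigma$ need not act as a $d$-cycle on that orbit. What you actually use is that the $\langle\sigma\rangle$-orbit of $p$ is contained in a finite set, so $\sigma^m(p)=p$ for some $m\ge 1$; equivalently, the points $p\oplus kn.\delta$ for $k\ge 0$ are pairwise distinct (as $\delta$ is non-torsion and $n\neq 0$) yet must all lie in the finite Galois orbit, which is already the contradiction. With that adjustment your proof is complete, and your treatment of the second claim via applying an arbitrary $\tau$ is equivalent to, and slightly more explicit than, the paper's field-extension argument.
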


\begin{proof}
Assume $\mathcal{J}_1$ has two roots on the same orbit, which we can note $\alpha_1,\alpha_2$. Then $\alpha_2=\alpha_1 \oplus N.\delta$. Now the Galois action is transitive as $\mathcal{J}_1$ is maximal. Thus the Galois group sends $\alpha_1$ to $\alpha_2$, and $\alpha_2$ to $\alpha_i$. If $i=1$ for all $\sigma\in \hbox{Gal}(\mathcal{J}_1)$, then $\{\alpha_1,\alpha_2\}$ is a stable subset by Galois action, and thus should be the zero set of a maximal ideal containing $\mathcal{J}_1$, which is impossible. Thus an index $i\neq 1$ can be obtained, that up to permutation we call $\alpha_3$. We have again $\alpha_3=\alpha_2 \oplus N.\delta$ as $\delta\in \mathcal{C}(\mathbb{K})$.
We now iterate the argument, building on the orbit $\alpha_4$ and so on until all roots of $\mathcal{J}_1$ are on the same orbit. As the elliptic addition is a rational transformation, this implies that all roots have rational expressions in $\alpha_1$, and thus $\sharp \hbox{Gal}(\mathcal{J}_1)=\sharp \mathcal{J}_1^{-1}(0)$. As the Galois group should be transitive, it is then cyclic. But then for the last root $\alpha_n$, we should have $\alpha_1=\alpha_n \oplus N.\delta$, and thus $(nN).\delta=O$ which is forbidden as $\delta$ is not of torsion.
If we take two roots $\alpha,\beta$ of respectively $\mathcal{J}_1,\mathcal{J}_2$ which are on the same orbit, then $\beta$ can be expressed rationally in function of $\alpha$ and vice versa. Thus the field extension defined by $\mathcal{J}_1,\mathcal{J}_2$ are the same, and  the Galois action on $\alpha$ of this field extension span all possible roots of $\mathcal{J}_1$ and thus its action on $\beta$ spans all roots of $\mathcal{J}_2$ too.
\end{proof}

This implies that the singularities of equation \eqref{eqell} can be studied up to Galois conjugacy: we never need to consider two different Galois conjugated roots, and if two roots are on the same orbit, then their conjugate also are. Remark moreover that if two roots are in a field extension $\mathbb{L}$ over $\mathbb{K}$, the two extension have to be the same as any field extension of $\mathbb{K}$ is stable by $\phi_\delta$. This allows to reduce the number of tests to do.

\begin{defi}
A divisor $D$ on $\mathcal{C}$ is a function $D:\mathcal{C} \rightarrow \mathbb{Z}$ which is non zero at finitely many points. It is called proper if its value at point $O$ is
$$D(O)=-\sum_{p\in \mathcal{C}\setminus \{O\}} D(p)$$
A divisor is said to be principal if there exists a rational function $f$ on $\mathcal{C}$ such that $D(p)=\hbox{val}_p(f),\; \forall p\in \mathcal{C}$. Thus the degree of a polynomial is minus the value of its divisor at $O$.
\end{defi}

Principal divisor are proper, but all proper divisors are not principal. Recall that a proper divisor is principal if and only if the sum of its points with multiplicities using the elliptic addition law equals $O$. For example, removing a root of the set of roots of a polynomial can define a divisor which is not principal, and thus is not the set of roots (with multiplicities) of another polynomial. Because of this, it is possible that all coefficients of equation \eqref{eqell} have a common root, but that we cannot simplify it nevertheless. Let us present then an algorithm to compute the roots of $a_0,a_l$ which are not common roots of all coefficients of equation \eqref{eqell}.\\

\noindent
\textsf{Singularties} Input: An equation of the form \eqref{eqell}\\
Output: A list of maximal ideals representing roots and their multiplicities.\\
\begin{enumerate}
\item Compute the prime decomposition $\mathcal{J}_1,\dots,\mathcal{J}_r$ of the ideal generated by $a_l$ and $\mathcal{C}$.
\item For $i=1\dots r$, compute the maximal power of $\mathcal{J}_i$ dividing all coefficients of \eqref{eqell} and the maximal power of $\mathcal{J}_i$ dividing $a_l$. The difference is the multiplicity of $\mathcal{J}_i$.
\item Compute the same for the ideal generated by $a_0$ and $\mathcal{C}$.
\item Return the list of the $\mathcal{J}_i$ with non zero multiplicities counted negatively for $a_l$, and positively for $a_0$.
\end{enumerate}

Remark that we cannot assume that $a_0$ and $a_l$ have distinct roots, so the same $\mathcal{J}_i$ can appear both positively and negatively. We now compute what we call the universal divisor.\\

\noindent
\textsf{UniversalDivisor} Input: An equation of the form \eqref{eqell}\\
Output: A divisor on $\mathcal{C}$
\begin{enumerate}
\item Compute $S=$\textsf{Singularties} of \eqref{eqell}.
\item For each pair of ideals of $S$, check if the field they define are the same. If yes, add the difference of two roots in a set $\Sigma$
\item Compute $b=\left\lceil\hat{h}(\delta)^{-1/2}\max_{p\in\Sigma} \hat{h}(p)^{1/2}\right\rceil$
\item For each element $\mathcal{J}$ of $S$ do
\begin{enumerate}
\item Compute $\phi_\delta^{-i}$ of a root of $\mathcal{J}$ for $i=1\dots b$, and check if one of them is a root of one of the ideals of $S$ other than $\mathcal{J}$.
\item If no, compute $\phi_\delta^i$ of a root $p$ of $\mathcal{J}$ for $i=1\dots b$ and compute a list $L$ of the sum of positive and minus the sum of negative multiplicities for the ideals $\mathcal{J}_i$ vanishing on a point $\phi_\delta^j(p),\; j=0\dots i $.
\item Compute $\min(L_{i-l,1},L_{i,2}),\; i=1\dots b$ with $0$ for non positive indices of $L$.
\end{enumerate}
\item Return the list of $\mathcal{J}$ who passed the first test with their associated list.
\end{enumerate}

The output defines a divisor whose support are the roots of the $\mathcal{J}$ and their shifts, and the value of the divisor is the value of the associated list.

\begin{prop}\label{propuniversal}
The output divisor $D$ of \textsf{UniversalDivisor} is such that for any rational solution of equation \eqref{eqell}, its divisor $D'$ is such that $D'\geq -D$.
\end{prop}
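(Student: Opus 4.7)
The plan is to show that any rational solution $f$ of \eqref{eqell} has pole divisor bounded pointwise by $D$, by combining a pole propagation argument with a telescoping of valuation inequalities along each singular orbit. I first normalize by dividing \eqref{eqell} by $\gcd(a_0,\dots,a_l)$, which leaves the solution space unchanged; since \textsf{Singularities} records exactly the excess multiplicities of $a_0, a_l$ over the common factor, I may assume the coefficients are coprime, so the support of $S$ consists precisely of the prime divisors of $a_0$ (positive multiplicity) and of $a_l$ (negative multiplicity).

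The central step is a pole propagation lemma: every pole of $f$ lies on an orbit of $\phi_\delta$ meeting $S$. Given a pole $q$ of order $m>0$ whose orbit misses $S$, one has $a_0(q)\neq 0$, so the $i=0$ term of \eqref{eqell} at $q$ has valuation exactly $-m$. As the sum vanishes, the minimum valuation must be attained at least twice, forcing a pole of order $\ge m$ at some $q\oplus i.\delta$ with $1\le i\le l$. Iterating produces an infinite forward chain of poles of non-decreasing order, contradicting rationality. The dispersion bound $b$ captures all such orbits: if $p_1,p_2$ are singularities with $p_2=p_1\oplus N.\delta$, the bilinearity of $\hat{h}$ gives $N^2\hat{h}(\delta) = \hat{h}(N.\delta) = \hat{h}(p_2\ominus p_1)\le\max_\Sigma\hat{h}$, whence $|N|\le b$. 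Thus step 4a correctly isolates each orbit's leftmost ideal $\mathcal{J}$ (no backward singularity within $b$ steps, hence none at all), and step 4b collects the cumulative valuations $L_{i,1},L_{i,2}$ along $\phi_\delta^0(p),\dots,\phi_\delta^b(p)$.

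Next I would derive the pointwise bound $m_i := -\hbox{val}_{\phi_\delta^i(p)}(f)\le \min(L_{i-l,1},L_{i,2})$. Evaluating \eqref{eqell} at $\phi_\delta^n(p)$ for $n<0$---where every coefficient valuation vanishes by the leftmost choice of $p$---pair-cancellation of the minimum valuation forces $m_0=\dots=m_{l-1}=0$. For $n\ge l$, successive evaluations at $\phi_\delta^{n-l}(p)$ tie $m_n$ to $m_{n-1},\dots,m_{n-l}$ and the $a_l$-valuation $\beta_{n-l}:=\hbox{val}_{\phi_\delta^{n-l}(p)}(a_l)$, telescoping to a bound in terms of cumulative $L_{\cdot,2}$; the dual argument starting from the rightmost pole and working backward (using the $j=0$ term of \eqref{eqell} at $\phi_\delta^n(p)$) yields the companion bound in terms of cumulative $L_{\cdot,1}$, and the minimum of the two matches \textsf{UniversalDivisor}'s output.

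The main technical obstacle is the pair-cancellation bookkeeping when the middle coefficients $a_j$, $0<j<l$, carry non-trivial valuations at $\phi_\delta^n(p)$: they can themselves enter the minimum-valuation pair and a priori loosen each step of the recursion. The cumulative rather than pointwise form of $L$ is precisely what absorbs these contributions, and the non-torsion hypothesis on $\delta$ (together with Proposition~\ref{propideals}) prevents the orbit from folding onto itself and collapsing the telescoping.
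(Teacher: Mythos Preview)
Your approach matches the paper's: both argue that poles of a rational solution can only lie on orbits meeting the zero locus of $a_0a_l$ (via a propagation argument), both invoke the canonical-height identity $\hat h(N.\delta)=N^2\hat h(\delta)$ to justify the dispersion bound $b$, and both bound the pole multiplicity at $\phi_\delta^i(p)$ by combining a forward cumulative count of $a_l$-roots with a backward cumulative count of $a_0$-roots. Your write-up is more explicit about the valuation recursion than the paper's proof, which simply asserts the formula $\min(L_{i-l,1},L_{i,2})$ after remarking that every pole must eventually be ``compensated'' by a root of $a_0$.

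Two small remarks. First, the ``main technical obstacle'' you flag about the middle coefficients $a_j$, $0<j<l$, is not actually one: since each $a_j$ is a polynomial on $\mathcal{C}$, its valuation at every point of the orbit is non-negative, so the inequality
\[
\beta_{n}-m_{n+l}\;\ge\;\min_{0\le j<l}\bigl(\hbox{val}(a_j)-m_{n+j}\bigr)\;\ge\;-\max_{0\le j<l} m_{n+j}
\]
holds regardless of whether some intermediate $a_j$ vanishes, and the telescoping goes through without any extra bookkeeping. Second, ``$m_0=\dots=m_{l-1}=0$'' should read ``$\le 0$'' --- a rational $f$ may have zeros at those points --- though this does not affect the upper bound you need.
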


\begin{proof}
Let us consider a rational solution $f$ of equation \eqref{eqell}. Along an orbit of $\phi_\delta$, the equation \eqref{eqell} define the next value in function of the former ones except when the dominant coefficient vanishes. Thus $f$ can only have poles on orbits of $\phi_\delta$ containing a root of $a_l$. However, if a single singularity is met along the orbit, infinitely many singularities would appear. Thus on the same orbit, they have to be compensated further along the orbit by a vanishing trailer coefficient.

In step $1$ the roots of $a_l,a_0$ are computed with multiplicity. In steps $2,3$ a bound is build on the shift in $\delta$ they could have between them: two points on the same orbits are in the same field, and the multiplicative property of $\hat{h}$ implies that if $p\ominus q=N.\delta$, then $\hat{h}(p\ominus q)/\hat{h}(\delta)=N^2$. Remark that these canonical height ratios only need to be computed with precision $1$, and as $\hat{h}(\delta)\neq 0$, only finite precision is needed.

In step $4$, we consider one by one the roots ideals. In step $4a$, we check if the root chosen is the first one on the orbit. If not, then the first one will be encountered further (or already analysed before) in the loop. If the root chosen is the first on its orbit, then we build a list of couple of integers. The first one, is an increasing sequence, which increases each time we encounter a new root of $a_l$. The second one, is an increasing sequence, which increases each time we encounter a new root of $a_0$.

Now for our rational function $f$, we have finitely many poles, and thus in particular finitely many on this orbit. Thus any pole has to be compensated at some point with a root of $a_0$. The highest possible pole multiplicity along the orbit is then given by $\min(L_{i-l,1},L_{i,2}),\; i=1\dots b$, and $0$ after and before. The returned expression in step $5$ is then a list of maximal ideals encoding for roots, and a list of integers giving the maximal order of a pole of $f$ along the orbit by $\phi_\delta$ issued from this root. This property is valid for every point, including $O$, and thus the divisor $D'$ of $f$ is such that $D'\geq -D$.
\end{proof}

This minoration of the divisor of a rational solution allows to bound the degree (which is the valuation at the point $O$), and the order and position of its poles. Remark that even the affine part of the divisor $D$, completed at $O$ such that it becomes proper, is not always principal. Thus this set of poles cannot always be represented by a single polynomial.

\begin{prop}\label{propreduc}
Every proper divisor can be reduced modulo the principal divisors to a divisor of the form $\mathbb{I}_p -\mathbb{I}_O,\; p\in\mathcal{C}$, where $\mathbb{I}_p$ is the indicator function at $p$. 
\end{prop}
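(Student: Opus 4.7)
The plan is to invoke the criterion stated in the paragraph immediately preceding the proposition: a proper divisor is principal if and only if the $\oplus$-sum of its support, weighted by multiplicities, equals $O$.

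Given a proper divisor $D$, I would define
$$p = \bigoplus_{q \in \mathcal{C}} D(q)\cdot q \;\in\; \mathcal{C},$$
a finite elliptic sum since $D$ has finite support. Consider the auxiliary divisor $E = D - (\mathbb{I}_p - \mathbb{I}_O)$. Two things must be checked. First, $E$ is proper: both $D$ (by assumption) and $\mathbb{I}_p - \mathbb{I}_O$ (by inspection) have total multiplicity zero, so $\sum_{q} E(q) = 0$, which is equivalent to properness. Second, the elliptic sum of $E$ weighted by multiplicities is $p \ominus p = O$ (the contribution from $O$ being trivial in the group law). By the criterion, $E$ is principal, so $D \equiv \mathbb{I}_p - \mathbb{I}_O$ modulo principal divisors.

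The only non-trivial ingredient is the principality criterion itself, a classical result identifying $\operatorname{Pic}^0(\mathcal{C})$ with the curve, which the paper takes as background. Were one to prove the proposition without invoking it directly, an elementary alternative would be induction on the support of $D$ using the identity $\mathbb{I}_q + \mathbb{I}_r \equiv \mathbb{I}_{q\oplus r} + \mathbb{I}_O$ modulo principal divisors, itself coming from the divisor of the straight line through $q$, $r$, and $\ominus(q\oplus r)$ in Weierstrass coordinates. Iterating this lets one merge two support points into one at each step, collapsing $D$ modulo principal divisors to $n\,\mathbb{I}_p + m\,\mathbb{I}_O$ for some $p$ and integers $n,m$; properness then forces $n = 1$ and $m = -1$, or $n = 0$ (handled by taking $p = O$ and using $\mathbb{I}_O - \mathbb{I}_O = 0$).
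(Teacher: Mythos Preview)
Your main argument is correct. You use exactly the principality criterion the paper recalls just before the proposition, and the verification that $D-(\mathbb{I}_p-\mathbb{I}_O)$ is proper with vanishing elliptic sum is clean.

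As for the comparison: the paper does not actually give a proof of this proposition. It is stated as a standard fact (essentially the isomorphism $\operatorname{Pic}^0(\mathcal{C})\simeq\mathcal{C}$) and is immediately used in the next paragraph. So your proof fills in what the paper leaves as background, and the route you take---invoking the quoted criterion directly---is the natural one and presumably what the author had in mind.

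One minor remark on your parenthetical ``alternative'' argument: the sentence ``properness then forces $n=1$ and $m=-1$, or $n=0$'' skips a step. Merging two distinct support points via $\mathbb{I}_q+\mathbb{I}_r\equiv\mathbb{I}_{q\oplus r}+\mathbb{I}_O$ reduces the size of the finite support but does not by itself reduce a multiplicity $n\geq 2$ at a single point. You need one more invocation of the same identity with $q=r=p$ (giving $2\mathbb{I}_p\equiv\mathbb{I}_{2\cdot p}+\mathbb{I}_O$) to bring $n$ down, and for negative coefficients the relation $\mathbb{I}_q+\mathbb{I}_{\ominus q}\equiv 2\mathbb{I}_O$ flips the sign. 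With those two extra moves the induction closes. This does not affect your primary argument, which stands on its own.
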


Thus adding at most one finite point to $D$ makes it principal. A candidate for the denominator of a rational solution is then a polynomial of minimal degree vanishing on $D$, and it will contain at most one additional unnecessary root. We conclude this section with the algorithm for computing rational solutions.\\

\noindent
\textsf{RationalSolutions} Input: An equation of the form \eqref{eqell}\\
Output: A basis of rational solutions of \eqref{eqell}\\
\begin{enumerate}
\item Compute $D=$\textsf{UniversalDivisor} of \eqref{eqell}.
\item Compute $Q$ a minimal degree polynomial (i.e. a polynomial with minimal number of roots) vanishing on the affine part of $D$.
\item Note $P$ a generic polynomial of degree $\deg Q+D(O)$, and solve the linear system with the rational fraction $P/Q$
\item Return a basis of solutions. 
\end{enumerate}

\begin{proof}[Proof of Theorem \ref{thm1} for rational solutions]
If $f$ is a rational solution, its divisor is $\hbox{div}(f)\geq -D$ where $D$ is the universal divisor. For finite poles, we consider $\tilde{D}$ the affine part of $D$. The polynomial $Q$ computed in step $2$ then vanishes on $D$ at required orders at finite places and thus $fQ$ has no finite poles. The degree of $f$ is bounded by $D(O)$, and thus the degree of $fQ$ is bounded by $\deg Q+D(O)$. Thus step $4$ returns a basis of rational solutions of \eqref{eqell}.
\end{proof}

\begin{cor}
The basis of solutions returned has coefficients in $\mathbb{K}$.
\end{cor}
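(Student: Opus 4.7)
The plan is to inspect \textsf{RationalSolutions} step by step and check that every intermediate object stays over $\mathbb{K}$. The input data is already $\mathbb{K}$-rational: the $a_i \in \mathbb{K}[u,v]$, the curve $\mathcal{C}$ is cut out by a polynomial in $\mathbb{K}[u,v]$, and the shift $\phi_\delta$ is $\mathbb{K}$-rational because $\delta \in \mathcal{C}(\mathbb{K})$. So the verification reduces to checking that the universal divisor $D$ and the denominator $Q$ can be chosen compatibly with the Galois action.

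First I would argue that $D$ is $\operatorname{Gal}(\overline{\mathbb{K}}/\mathbb{K})$-stable. Its support is obtained from the prime decompositions of the ideals $(a_l) + (\mathcal{C})$ and $(a_0) + (\mathcal{C})$ in $\mathbb{K}[u,v]$, and the multiplicity that \textsf{UniversalDivisor} attaches to an orbit depends only on the underlying maximal ideal $\mathcal{J}_i$, not on which geometric root of $\mathcal{J}_i$ is selected. Hence $D(\sigma(q)) = D(q)$ for every $\sigma \in \operatorname{Gal}(\overline{\mathbb{K}}/\mathbb{K})$ and every geometric point $q$, so its affine part $\tilde{D}$ is Galois-invariant as a divisor on $\mathcal{C}$.

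Next I would produce $Q \in \mathbb{K}[u,v]$. By Proposition \ref{propreduc}, the divisor $\tilde{D} - \deg(\tilde{D})\,\mathbb{I}_O$ becomes principal after adding at most one finite point. The missing point, if any, is forced to be $p = \ominus s$, where $s = \bigoplus_{q} \tilde{D}(q)\cdot q$ denotes the sum of $\tilde{D}$ computed with the elliptic addition. Because $\tilde{D}$ is Galois-stable and $\oplus$ is defined over $\mathbb{K}$, we get $s, p \in \mathcal{C}(\mathbb{K})$. Thus the completed divisor is a $\mathbb{K}$-rational principal divisor and admits a minimal-degree defining polynomial $Q \in \mathbb{K}[u,v]$. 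I expect this to be the main obstacle of the argument, since it is the only place where a priori one could be forced to leave $\mathbb{K}$; the crucial inputs are the Galois-stability of $\tilde{D}$ and the fact that $\mathcal{C}$ together with $\oplus$ is $\mathbb{K}$-rational.

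Finally, with $Q \in \mathbb{K}[u,v]$ and a generic $P$ of degree $\deg Q + D(O)$, substituting $P/Q$ into \eqref{eqell} produces a homogeneous linear system over $\mathbb{K}$ in the coefficients of $P$, all operations involved ($\phi_\delta$, multiplication by $Q$ and the $a_i$, reduction modulo $v^2-u^3-au-b$) being $\mathbb{K}$-linear. Its solution space admits a $\mathbb{K}$-basis, which the algorithm returns.
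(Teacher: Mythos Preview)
Your proposal is correct and follows essentially the same route as the paper: the universal divisor is built from maximal ideals of $\mathbb{K}[u,v]$, hence is Galois-stable, so $Q$ may be taken in $\mathbb{K}[u,v]$, and then the linear system in step~3 is over $\mathbb{K}$. The paper's own proof is much terser and simply asserts that $Q$ can be chosen with coefficients in $\mathbb{K}$; your explicit argument that the correcting point $p=\ominus s$ lies in $\mathcal{C}(\mathbb{K})$ (because $\tilde{D}$ is Galois-stable and $\oplus$ is $\mathbb{K}$-rational) fills in a detail the paper leaves implicit.
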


\begin{proof}
The universal divisor is defined with maximal ideals in $\mathbb{K}[u,v]$, and thus the polynomial $Q$ can be chosen with coefficients in $\mathbb{K}$. The linear system solving in step $3$ can also be done in $\mathbb{K}$, and thus the returned basis of solutions has coefficients in $\mathbb{K}$.
\end{proof}

\noindent
\textbf{Example 1}\\
$$(5403\sqrt{2} + 238 + (522\sqrt{2} - 2387)u - (1392\sqrt{2} + 48)v - (247 - 192\sqrt{2})u^3 +$$
$$(27\sqrt{2} + 348)u^2 + (-144\sqrt{2} + 560)uv)f((u,v)\oplus 2.\delta)$$
$$+(6117\sqrt{2} - 4078 - (522\sqrt{2} - 2387)u - (1536\sqrt{2} - 1024)v + 183u^3 + (549\sqrt{2} - 540)u^2 - 512uv) f(u,v)=0$$
The singularities are
$$\left[\left[\frac{2365\sqrt{2}+2418}{529} , -\frac{142211\sqrt{2}+238865}{12167}\right], -1\right], \left[\left[\frac{225361}{33489},\frac{109585196}{6128487}\right], 1\right],\left[\left[1, 4\right], -1\right], $$
$$\left[\left[\frac{17684189\sqrt{2}+ 25350354}{305809}, \frac{126593080067\sqrt{2}+178345046065}{-169112377}\right], -1\right],$$ 
$$\left[\left[2-3\sqrt{2},5\sqrt{2}-9\right], 1\right], \left[\left[2-3\sqrt{2}, 9 - 5\sqrt{2}\right], 1\right]$$
The universal divisor is
$$\left[\left[-3\sqrt{2} + 2, 9 - 5\sqrt{2}\right], 1\right], \left[\left[-3\sqrt{2} + 2, -9 + 5\sqrt{2}\right], 1\right]$$
One rational solution is found
$$f(u,v)=\frac{u-1}{u- 2+3\sqrt{2}}$$

\noindent
\textbf{Example 2}\\
$$(683755344u^5+428826069u^4-2283387894u^3v+23241765780u^3-4867086366u^2v+18906489126u^2-$$
$$3183541218uv+12311643924u-112099291146v+434160746253)f((u,v)\oplus 2.\delta)+$$
$$(710713440u^5+445733190u^4-2373413940u^3v+24158107800u^3-5058978660u^2v+19651906260u^2-$$
$$3309057180uv+12797049240u-116518976460v+451278195030)f((u,v)\oplus \delta)+$$
$$(1523907765u^4-1690718238u^3v+21340525860u^3-1438821414u^2v+5276955222u^2-$$
$$10483308378uv+40783964436u-113647578210v+440116351677)f(u,v)=0$$
The singularities are
$$\left[\left[\frac{1}{4}, \frac{31}{8}\right], -1\right], \left[\left[-\frac{119}{64}, \frac{1499}{512}\right], -1\right], \left[\left[\frac{225361}{33489}, \frac{109585196}{6128487}\right], -1\right], \left[\left[\right], 1\right],$$
$$\left[\left[\frac{-23124632487283 + 160099089\alpha}{8160589921352},\frac{71442233397165191733+833373502407169\alpha}{16484179465793084848}\right], 1\right],$$
$$\left[\left[\frac{- 23124632487283 -160099089\alpha}{8160589921352},\frac{71442233397165191733- 833373502407169\alpha}{16484179465793084848}\right], 1\right]$$
where $\alpha^2=-4553557895$. One rational solution is found
$$f(u,v)=\frac{u - 6v + 23}{u - 1}$$

Remark that building simple examples having rational solution is not immediate: even the simplest rational solutions as above have $3$ or $4$ singularities, which after two shifts give $10$ or more singularities, and some new ones possibly in field extensions.

\section{Pseudo-rational solutions}

Even for the classical shift on $\mathbb{C}$, hyperexponential functions with finitely many poles and roots are not always rational as a factor $\lambda^n$ can appear. Similarly, a multiplicative factor can appear in front a rational function for elliptic difference equations, by considering a non zero solution of equation $f(p\oplus \delta)-\lambda f(p)$. However, this is not enough to encompass all such pseudo-rational solutions.

\begin{defi}
We note $\Theta_{\lambda,\alpha,\beta}$ with $(\alpha,\beta)\in\mathcal{C},\; \lambda\in\mathbb{C}^*$ a solution of the equation
$$ \Theta_{\lambda,\alpha,\beta}(p\oplus \delta)-R(p) \Theta_{\lambda,\alpha,\beta} =0$$
where $R$ is rational and defined by having for divisor $-\mathbb{I}_{(\alpha,\beta)\ominus\delta}+\mathbb{I}_{(\alpha,\beta)}+\mathbb{I}_{\ominus\delta}-\mathbb{I}_{O}$ and $\lambda$ is the quotient of dominant coefficients of $R$.
\end{defi}

\begin{prop}
The function $\Theta_{\lambda,\alpha,\beta}$ is well defined, has a unique simple pole at $(\alpha,\beta)$ and a unique simple root at $O$.
\end{prop}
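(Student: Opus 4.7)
The plan is in three steps, roughly: check that $R$ itself is well defined, construct a solution $\Theta$ analytically, then read off its divisor.

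First, I would verify that $R$ exists. The prescribed divisor $-\mathbb{I}_{(\alpha,\beta)\ominus\delta}+\mathbb{I}_{(\alpha,\beta)}+\mathbb{I}_{\ominus\delta}-\mathbb{I}_O$ has degree $-1+1+1-1=0$, so it is proper, and its sum under the elliptic law is
$$(\alpha,\beta)\oplus(\ominus\delta)\ominus\bigl((\alpha,\beta)\ominus\delta\bigr)\ominus O \;=\; (\alpha,\beta)\ominus\delta\ominus\bigl((\alpha,\beta)\ominus\delta\bigr) \;=\; O.$$
By the principality criterion recalled just after Proposition~\ref{propreduc}, there exists a rational function on $\mathcal{C}$ with this divisor, unique up to a multiplicative scalar; fixing the ratio of dominant coefficients to $\lambda$ pins it down to a single $R$.

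Second, I would produce $\Theta$ via the analytic uniformization. Parametrize $\mathcal{C}$ by $z\mapsto(\wp(z),\wp'(z))$ with period lattice $\Lambda$, let $a\in\mathbb{C}/\Lambda$ correspond to $(\alpha,\beta)$ and $\tau$ to $\delta$, and set
$$\Theta(p) \;=\; c\,\frac{\sigma(z)}{\sigma(z-a)}$$
for a normalization constant $c$. The standard quasi-periodicity relation $\sigma(z+\omega)=\pm e^{\eta(\omega)(z+\omega/2)}\sigma(z)$ for $\omega\in\Lambda$ implies that the ratio $\Theta(z+\tau)/\Theta(z)$ is $\Lambda$-periodic in $z$, hence descends to a rational function on $\mathcal{C}$. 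A direct computation of its poles and zeros shows that this ratio has exactly the divisor $\mathrm{div}(R)$ of Step~1, so it equals $R$ up to a constant, which is absorbed in $c$ to force the defining relation $\Theta(p\oplus\delta)=R(p)\Theta(p)$ with the correct $R$.

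Third, the divisor of $\Theta$ is immediate from the formula: $\sigma$ has only simple zeros on $\Lambda$, so $\Theta$ has a simple zero only at $O$ and a simple pole only at $(\alpha,\beta)$, giving $\mathrm{div}(\Theta)=\mathbb{I}_O-\mathbb{I}_{(\alpha,\beta)}$ as claimed. As a combinatorial double-check, any divisor $D$ of a solution satisfies $D(\cdot\oplus\delta)-D=\mathrm{div}(R)$; the candidate $D_0=\mathbb{I}_O-\mathbb{I}_{(\alpha,\beta)}$ satisfies this by inspection, and any other finite-support solution would differ from $D_0$ by a nonzero $\phi_\delta$-periodic divisor of finite support, impossible because $\delta$ is non-torsion.

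The main obstacle is the fine normalization inside Step~2: matching the analytic sigma-ratio to the specific $R$ with leading-coefficient ratio exactly $\lambda$, rather than to some unspecified scalar multiple of it, requires careful bookkeeping of the Weierstrass quasi-period factors $\eta(\omega)$. Once that normalization is carried out, the zero/pole structure falls out of the zero set of $\sigma$ with no further work.
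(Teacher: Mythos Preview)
Your proof is correct and takes a genuinely different route from the paper's. The paper argues purely combinatorially: after checking that $R$ is well defined (your Step~1, essentially identical), it never constructs $\Theta$ as an analytic object but simply tracks valuations along orbits of $\phi_\delta$. On the orbit through $(\alpha,\beta)$ the pole of $R$ at $(\alpha,\beta)\ominus\delta$ forces a simple pole of $\Theta$ at $(\alpha,\beta)$, which is immediately cancelled one step later by the zero of $R$ at $(\alpha,\beta)$; symmetrically, on the orbit of $O$ the zero of $R$ at $\ominus\delta$ forces a simple zero of $\Theta$ at $O$, cancelled at the next step by the pole of $R$ at $O$. This is exactly your ``combinatorial double-check'' in Step~3, which for the paper is the whole proof. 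Your primary argument instead realises $\Theta$ explicitly as $c\,\sigma(z)/\sigma(z-a)$ via the Weierstrass uniformization and reads the divisor off the zeros of $\sigma$. This buys an honest analytic representative---the paper later alludes to such a formula through elliptic integrals of the first and third kind but does not use it---at the cost of importing the quasi-periodicity of $\sigma$ and the normalisation bookkeeping you flag. The paper's approach stays entirely inside the difference-equation language, which fits its explicit disclaimer that only behaviour along orbits matters, not analytic properties.
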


\begin{proof}
Let us first remark that the divisor $-\mathbb{I}_{(\alpha,\beta)\ominus\delta}+\mathbb{I}_{(\alpha,\beta)}+\mathbb{I}_{\ominus\delta}-\mathbb{I}_{O}$ is principal as
$$\ominus((\alpha,\beta) \oplus (\ominus\delta)) \oplus (\alpha,\beta)\oplus (\ominus\delta)=(\ominus(\alpha,\beta))\oplus \delta \oplus (\alpha,\beta) \oplus (\ominus\delta)=O$$
and thus $R$ exists. Now following the orbit from $(\alpha,\beta)$, we encounter a pole at $(\alpha,\beta)\ominus\delta$ which creates a singularity at $(\alpha,\beta)$ in $\Theta_{\lambda,\alpha,\beta}$, but which is immediately compensated at the next point $(\alpha,\beta)\oplus\delta$ thanks to the root of $R$ at $(\alpha,\beta)$. The same occurs at infinity where $\ominus\delta$ is a root of $R$ which creates a root in $\Theta_{\lambda,\alpha,\beta}$ at $O$ and is then immediately compensated at the next point $\delta$ by a pole of $R$ at $O$. Thus $\Theta_{\lambda,\alpha,\beta}$ has a unique simple pole at $(\alpha,\beta)$ and a unique simple root at $O$.
\end{proof}

In the limit case $(\alpha,\beta)=O$, the $\Theta_{\lambda,O}$ satisfies the much simpler equation
$$ \Theta_{\lambda,O}(p\oplus \delta)-\lambda \Theta_{\lambda,O} =0.$$
When the point $(\alpha,\beta)\in\mathcal{C}$ is of torsion, the divisor of $\Theta_{\lambda,\alpha,\beta}$ will be $-\mathbb{I}_{(\alpha,\beta)}+\mathbb{I}_{O}$, which is a torsion divisor. Thus a multiple of this divisor can be realized by a rational function $f$ on $\mathcal{C}$. Thus dividing $\Theta_{\lambda,\alpha,\beta}$ be a suitable $n$ th root of $f$ will produce a function without any poles nor roots. As $(f(p\oplus \delta)/f(p))^{1/n}$ will be rational as its divisor is principal by construction, the ratio $\Theta_{\lambda,\alpha,\beta}/f$ will still be hyperexponential, and without any root or poles, and thus equal to a function $\Theta_{\tilde{\lambda},O}$. Now when $\tilde{\lambda}$ is a root of unity, this function will be algebraic (it will have in fact finitely many values), and thus $\Theta_{\lambda,\alpha,\beta}$ will be algebraic.\\

\noindent
\textbf{Example}\\
The function $\Theta_{\lambda,x,y}$ is given by the following relation
$$\frac{\Theta_{\lambda,x,y}((u,v)\oplus \delta)}{\Theta_{\lambda,x,y}(u,v)}=\frac{((u-1)y-(v+4)x+4u+v)\lambda(x-1)^2}{(y+4)(-8y+(u-1)x^2 -(2u+1)x+u-30)}$$
For example, specializing the point $(x,y)$ to $(0,\sqrt{15})$ which is a $3$-torsion point on $\mathcal{C}$, we find the relation
$$\Theta_{(1921+496\sqrt{15})^{1/3},0,\sqrt{15}}(u,v)=\frac{c}{(v-\sqrt{15})^{1/3}}.$$
where $c$ is an unspecified constant.

\begin{prop}
A pseudo-rational solution can be written $F(u,v) \Theta_{\lambda,\alpha,\beta}(u,v)$ where $F$ is rational on $\mathcal{C}$.
\end{prop}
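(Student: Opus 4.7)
Let $f$ be a pseudo-rational solution and set $D = \mathrm{div}(f)$, a divisor with finite support. The plan is to use Proposition \ref{propreduc} to produce a rational function $g$ such that $fg$ has the same divisor as some $\Theta_{\lambda',\alpha,\beta}$, then to absorb the remaining hyperexponential factor into the parameter $\lambda$.

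First, I show that $D$ is proper. Writing $R(p) := f(p \oplus \delta)/f(p)$, which is rational on $\mathcal{C}$ by hyperexponentiality, gives $\mathrm{div}(R) = D \circ \phi_\delta - D$. Substituting $q = p \oplus \delta$ in the elliptic sum yields $s(D \circ \phi_\delta - D) = \ominus \deg(D) \cdot \delta$. Since $R$ is rational this sum must equal $O$, and as $\delta$ is not of torsion, $\deg(D) = 0$.

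Applying Proposition \ref{propreduc} to the proper divisor $-D$ then yields a point $(\alpha,\beta) \in \mathcal{C}$ and a rational function $g$ on $\mathcal{C}$ with $\mathrm{div}(g) = -D - \mathbb{I}_{(\alpha,\beta)} + \mathbb{I}_O$, so that $fg$ is hyperexponential with divisor exactly $-\mathbb{I}_{(\alpha,\beta)} + \mathbb{I}_O$, matching that of any $\Theta_{\lambda',\alpha,\beta}$. The quotient $h := fg/\Theta_{\lambda',\alpha,\beta}$ is hyperexponential with trivial divisor, hence (by the identification already used in the discussion preceding this proposition) $h$ is a $\Theta_{\mu,O}$ for some $\mu \in \mathbb{C}^*$. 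The product $\Theta_{\lambda',\alpha,\beta} \cdot \Theta_{\mu,O}$ solves the first-order equation whose coefficient has the divisor prescribed for $\Theta_{\lambda,\alpha,\beta}$ and dominant-coefficient ratio $\lambda := \mu \lambda'$, so it coincides with $\Theta_{\lambda,\alpha,\beta}$ up to a multiplicative constant $c$. Therefore $fg = c \, \Theta_{\lambda,\alpha,\beta}$, giving $f = F \cdot \Theta_{\lambda,\alpha,\beta}$ with $F := c/g$ rational; the degenerate case $D = 0$ corresponds to $(\alpha,\beta) = O$ with $F$ a constant.

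The main obstacle is the first step: forcing $\deg(D) = 0$ combines the rationality of the shift ratio $R$ with the crucial non-torsion hypothesis on $\delta$. Afterwards, the argument is essentially formal divisor bookkeeping based on Proposition \ref{propreduc} and the defining properties of $\Theta_{\lambda,\alpha,\beta}$.
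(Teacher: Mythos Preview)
Your proof is correct and follows the same overall arc as the paper's: establish that $\mathrm{div}(f)$ is proper, reduce it modulo principal divisors via Proposition~\ref{propreduc} to the divisor of some $\Theta_{\lambda,\alpha,\beta}$, and absorb the leftover invertible hyperexponential factor into the parameter $\lambda$. The one genuine difference lies in the properness step. The paper argues indirectly: it first builds a product of $\Theta$ functions matching $D$ at all finite places, then observes that the quotient can have poles or zeros only at $O$, and rules this out because the divisor $N\mathbb{I}_{\ominus\delta}-N\mathbb{I}_O$ of its shift ratio would force $N.\delta=O$. Your computation $s(D\circ\phi_\delta - D) = \ominus\deg(D)\cdot\delta$ reaches the same conclusion in one line, bypassing the auxiliary construction entirely. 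This is cleaner and makes the role of the non-torsion hypothesis on $\delta$ more transparent; the paper's route, by contrast, lets the $\Theta$ functions carry more of the structural weight, which fits its expository emphasis on them as building blocks.
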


\begin{proof}
A pseudo-rational solution $f$ is by definition a hyperexponential solution which has only finitely many singularities and roots. Thus we can attach to it a divisor $D$. However, it could be not principal and a priori not proper. Now using a product of function $\Theta_{\lambda,\alpha,\beta}$, we can realize a divisor equal to $D$ for all finite points. It can still differ at $O$. Making the quotient between our pseudo-rational function $f$ and this product gives a hyperexponential function $g$ which has no poles nor roots except possibly at $O$. Thus $g(p\oplus \delta)/g(p)$ is a rational function which has a root at $\ominus\delta$ and a pole at $O$ with same multiplicity. Its divisor is thus $N\mathbb{I}_{\ominus\delta}+N\mathbb{I}_O$ for some $N\in\mathbb{Z}$. However, if this divisor was principal, then we would have $N.\delta=O$ which is forbidden as $\delta$ is not of torsion.

Thus $D$ can be realized as a product quotients of functions $\Theta_{\lambda,\alpha,\beta}$, and $D$ is proper as all divisors of the $\Theta_{\lambda,\alpha,\beta}$ are proper. Now a proper divisor can be reduced modulo the principal divisors to a divisor with a single finite point $p$ and $O$. Thus dividing $f$ by a suitable rational function will give a pseudo-rational function with only one pole at $p$ and one root at $O$. This is the divisor of $\Theta_{\lambda,p}$.

Thus the divisor $D$ can be realized by a function of the form $F(u,v) \Theta_{\lambda,\alpha,\beta}(u,v)$ where $F$ is rational on $\mathcal{C}$. This function is hyperexponential by construction, and thus dividing $f$ by it produces a hyperexponential function $g$ without any poles nor roots. Thus the quotient $g(p\oplus \delta)/g(p)$ is a rational function, and can neither have roots nor poles. It is thus a constant function. This constant factor can then be removed by adjusting the parameter $\lambda$ in $\Theta_{\lambda,\alpha,\beta}(u,v)$ which multiplies $R$ by a constant. Thus we can assume that $g(p\oplus \delta)=g(p)$ and thus $g$ is constant.
\end{proof}

Remark that we do not try to understand possible analytic properties of these pseudo-rational functions. Only the knowledge along each orbit is enough to determine if they are solution of an elliptic difference equation. Thus a constant function is a function which is constant on all orbits, and is from the point of view of the elliptic difference equation not distinguishable from the constant function everywhere. A similar phenomenon occurs for the classical shift in $\mathbb{C}$ where functions $n \rightarrow e^{2i\pi kn}$ with $k\in\mathbb{Z}$ are for all purpose constant as they are constant on all orbits.

We can nonetheless find possible analytic formulas for $\Theta$ as an exponential of a sum of an elliptic integral of the first kind (for the $\lambda$ parameter) and an elliptic integral of the third kind (for the $\alpha,\beta$ parameter). As already known cite, elliptic integrals of the third kind become elementary for a torsion singular point with logarithmic singularity, and thus taking the exponential we recover the algebraic nature of $\Theta$ function in this case.

We now want to design an algorithm to compute pseudo-rational solutions. The possible parameters $\lambda$ can be obtained by considering a point $p$ on a non singular orbit and computing
\begin{equation}\label{eqprod}
\lambda=\exp\left(\underset{n\rightarrow \infty}{\overline{\lim}} \frac{\ln f(p\oplus n.\delta)-\ln f(p)}{n}\right)
\end{equation}
where $f$ is a non zero solution of equation \eqref{eqell}. The $\lambda$ depends on the choice of $f$, so this limit has to be computed for all solutions. This expression gives the possible $\lambda$'s in a pseudo-rational solution as the rational factor in $f$ will induce in $\ln f(p\oplus n.\delta)$ a logarithmic growth, and the $\Theta$ function will for large $n$ have the behaviour of multiplying by $\lambda$ at each shift of $\delta$ on the orbit. We use $\overline{\lim}$ as it is unclear such quantity would converge for non pseudo-rational solutions.

Now expression \eqref{eqprod} is nice but totally unusable for explicit computation. Indeed we will most likely obtain transcendental numbers, and it is even unclear that for all choice of $f$ (a $l$ dimensional vector space) we will obtain a finite set of $\lambda$. In the shift case on $\mathbb{C}$, the possible $\lambda$'s can be obtained through local analysis at infinity, which is a fixed point for the shift. In particular, the $\lambda$ is always algebraic even for non hyperexponential solutions.

In the elliptic case, the solution to this problem is just to not solve it. Indeed, it would be convenient to know $\lambda$ in advance, but it is not necessary. Just replacing $f(p\oplus i.\delta) \rightarrow \lambda^i f(p\oplus i.\delta)$ in equation \eqref{eqell} will not change the singularities and thus neither the output of \underline{UniversalDivisor}. Thus in \textsf{RationalSolutions}, the linear system to solve will simply have $\lambda$ as a parameter appearing polynomially in the matrix of the equation.

The other difficulty is the function $\Theta_{\lambda,\alpha,\beta}$. The parameter $(\alpha,\beta)\in \mathcal{C}$ can be a priori anywhere on $\mathcal{C}$ as it is related to the ``default'' of principality of a divisor. We already know the locus of the poles, but only a bound on their multiplicities, and for the roots nothing apart a bound on their number. However
the reasoning for the construction of the universal divisor stays valid. Indeed, we only use the fact that the solution should have finitely many roots and poles, and thus

\begin{prop}
The output divisor $D$ of \textsf{UniversalDivisor} is such that for any pseudo-rational solution of equation \eqref{eqell}, its divisor $D'$ is such that $D'\geq -D$.
\end{prop}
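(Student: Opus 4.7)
The plan is to observe that the proof of Proposition \ref{propuniversal} used only one property of a rational solution $f$, namely that $f$ has finitely many poles on $\mathcal{C}$. A pseudo-rational solution shares this property by definition, so the proof transports essentially verbatim.

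First I would check that a pseudo-rational $f$ genuinely has a divisor on $\mathcal{C}$ with finite support, since a priori $f$ is only prescribed along orbits of $\phi_\delta$. By the preceding proposition, $f = F \cdot \Theta_{\lambda,\alpha,\beta}$ with $F$ rational, and $\Theta_{\lambda,\alpha,\beta}$ has the known divisor $\mathbb{I}_O - \mathbb{I}_{(\alpha,\beta)}$. Hence
$$\mathrm{div}(f) = \mathrm{div}(F) + \mathbb{I}_O - \mathbb{I}_{(\alpha,\beta)}$$
makes sense as a finitely supported divisor on $\mathcal{C}$.

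Next I would rerun the propagation argument. Rewriting \eqref{eqell} as
$$a_l(p)\, f(p\oplus l.\delta) = -\sum_{i=0}^{l-1} a_i(p)\, f(p\oplus i.\delta),$$
a pole of $f$ at $q$ forces a pole of the same order at $q\oplus\delta$ unless it is compensated by a zero of $a_l$ at $q\ominus l.\delta$; dually, using the $a_0$ form of the equation, a pole at $q$ forces a pole at $q\ominus\delta$ unless absorbed by a zero of $a_0$. Finite support of $\mathrm{div}(f)$ means that on any orbit the forward propagation must eventually be absorbed by zeros of $a_l$ and the backward propagation by zeros of $a_0$. The cumulative zero counts along the orbit are exactly the lists $L_{\cdot,1}, L_{\cdot,2}$ built in \textsf{UniversalDivisor}, and the multiplicity bound $\min(L_{i-l,1}, L_{i,2})$ on the pole of $f$ at the $i$-th point of the orbit follows just as in the rational case. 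Since the orbit through $O$ is treated uniformly with all others, the bound at $O$ is included, giving $D' \geq -D$.

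There is no serious obstacle: the statement is a direct transcription of Proposition \ref{propuniversal} once one recognizes that its proof only invokes finiteness of the pole set. The only mildly subtle point, already dispatched above, is being entitled to speak of $\mathrm{div}(f)$ on $\mathcal{C}$ at all; this is settled by the factorization $f = F \cdot \Theta_{\lambda,\alpha,\beta}$.
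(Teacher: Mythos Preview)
Your approach is exactly the paper's: the paper's proof is the single observation that the argument for Proposition~\ref{propuniversal} used only that $f$ has finitely many poles and roots, hence applies verbatim to pseudo-rational solutions. Your preliminary justification of $\mathrm{div}(f)$ via the factorization $f=F\cdot\Theta_{\lambda,\alpha,\beta}$ is harmless but unnecessary, since finiteness of the pole and root set is already part of the \emph{definition} of pseudo-rational.

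One correction to your detailed rerun: you have the roles of $a_0$ and $a_l$ swapped. From $a_l(p)f(p\oplus l.\delta)=-\sum_{i<l}a_i(p)f(p\oplus i.\delta)$ one sees that, moving forward along an orbit, a pole can only be \emph{created} where $a_l$ vanishes and can only be \emph{absorbed} where $a_0$ vanishes (and dually going backward). Thus forward propagation is stopped by zeros of $a_0$, backward propagation by zeros of $a_l$, matching the lists $L_{\cdot,1},L_{\cdot,2}$ in \textsf{UniversalDivisor} and the bound $\min(L_{i-l,1},L_{i,2})$ as stated in the paper.
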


\begin{proof}
The proof of Proposition \ref{propuniversal} never used the fact that $f$ is rational, only the fact that $f$ has finitely many roots and poles. Thus the proof applies for pseudo-rational solutions.
\end{proof}

Let us look at the algorithm \textsf{RationalSolutions}. After computing a universal denominator $Q$, the resulting divisor could be not principal, and thus for pseudo-rational solutions a function $\Theta$ would be needed. The solution is to relax by $1$ the constraint on the degree of the numerator $P$ and allow the compensation of the additional root with a function $\Theta$. As we cannot know this superfluous root before computing $P$, we need to make the computation of the kernel of the matrix while keeping the parameter $(\alpha,\beta)\in\mathcal{C}$.\\

\noindent
\textsf{PseudoRationalSolutions} Input: An equation of the form \eqref{eqell}\\
Output: A basis of pseudo-rational solutions of \eqref{eqell}\\
\begin{enumerate}
\item $i:=0, B_0:=\emptyset$, note $\hbox{Eq}$ equation \eqref{eqell}. While order of $\hbox{Eq}>0$ and $i=0 \hbox{ or} \sharp B_i >0$ do
\begin{enumerate}
\item Reduce the order of $\hbox{Eq}$ by removing the solution $B_i$, obtain a new equation $\hbox{Eq}$.
\item Compute $D=$\textsf{UniversalDivisor} of $\hbox{Eq}$ and compute $Q$ a minimal degree polynomial vanishing on the affine part of $D$.
\item Note $P$ a generic polynomial of degree $\deg Q+D(O)+1$ and replace $PQ^{-1}\Theta_{\lambda,\alpha,\beta}$ in equation \eqref{eqell}, divide the equation by $\Theta_{\lambda,\alpha,\beta}$ and use the formula for $\Theta_{\lambda,\alpha,\beta}(p\oplus \delta)/\Theta_{\lambda,\alpha,\beta}(p)$ to remove all $\Theta$ in the equation.
\item Solve the linear system given by the coefficients in $u,v$ of this equation, in function of the parameters $\lambda,(\alpha,\beta)\in \mathbb{C}^*\times \mathcal{C}$.
\item If non empty, note $B_{i+1}$ one solution. Increase $i$ by $1$.
\end{enumerate}
\item Return the list of the $B_i$
\end{enumerate}

\begin{proof}[Proof of Theorem \ref{thm1} for pseudo-rational solutions]
If $f$ is a pseudo-rational solution, its divisor is $\hbox{div}(f)\geq -D$ where $D$ is the universal divisor. For finite poles, the polynomial $Q$ computed in step $2b$ vanishes on the affine part of $D$, and thus $fQ$ has no finite poles. The number of roots of $fQ$ is now bounded by $\deg Q+D(O)$. However the roots of $f$ do not always form a principal divisor. We know however that, up to adding at most one point, the divisor can be made principal. We can thus relax the constraint on the degree by $1$, and we then look for a $P$ of degree $\leq \deg Q+D(O)+1$, which we multiply by a function $\Theta_{\lambda,\alpha,\beta}$ to remove this additional root. Thus $f$ can be written $P Q^{-1} \Theta_{\lambda,\alpha,\beta}$ for some $P,\lambda,(\alpha,\beta)$. Substituting this expression in equation \eqref{eqell}, simplifying it, and taking the coefficients in $u,v$ give a list of linear equations in the coefficients of $P$ whose solutions are solution of \eqref{eqell}. The parameters $\lambda,(\alpha,\beta)$ appear non linearly. In step $2d$, we look for all $\lambda,(\alpha,\beta)$ such that this system has a non zero solution. If it is non empty, we note $B_i$ one of them in step $2e$.

In the next loop, this solution will be removed in step $2a$ from equation $\hbox{Eq}$ and thus the order will reduce by one. If no solution is found, then the loop is stopped and there are no pseudo-rational solution in $\hbox{Eq}$. The loop always terminate as the order of $\hbox{Eq}$ decreases by one at each step. The returned list $B$ in step $3$ is a list of solution of equation \eqref{eqell} and they are linearly independent by construction.
\end{proof}

The while loop is necessary the ensure that all solutions are linearly independent. However, we can ask if continuum of solutions are possible in step $2d$. If a continuum exists, then by evaluation we obtain arbitrary many pseudo-rational solutions $f_1,\dots,f_k$ of \eqref{eqell}. Evaluating these solutions on a non singular orbit, we obtain the asymptotic estimates
$$ \sum c_i f_i(p\oplus n.\delta)=\sum  c_ie^{n\ln\lambda_i+o(n)}$$
Thus if a linear combination of these $f_i$ equals zero, so is the right expression. We see that the terms $e^{n\ln\lambda_i+o(n)}$ cannot compensate between each other, and thus the only possibility is that all $\lambda_i$ are equal. Thus if a continuum exists, $\lambda$ is fixed along it, and thus the parameter $(\alpha,\beta) \in\mathcal{C}$ is free.

Now sending the parameter $(\alpha,\beta)$ to $O$, and up to computing a series expansion for the rational part of the expression of the solution, we obtain a solution of the form $\frac{P}{Q} \Theta_{\lambda,O}$. These solutions are simpler than for a generic $(\alpha,\beta)$ and can be searched independently. Removing them first would ensure that only zero dimensional ideal in the parameter space are encountered, thus possibly reducing the computation time.\\

\noindent
\textbf{Example 1}\\
Consider the equation
$$21(17u^2-31u+46+9\sqrt{2}u- 4v\sqrt{2}+87\sqrt{2}-8v)(u-109)f((u,v)\oplus \delta)$$
$$-(1+3\sqrt{2})(21u-13+2v)(u-2+3\sqrt{2})(u-1)^2f(u,v)=0$$
The universal divisor is found
$$\left[\left[-3\sqrt{2} + 2, 9 - 5\sqrt{2}\right], 1\right], \left[\left[-3\sqrt{2} + 2, -9 + 5\sqrt{2}\right], 1\right], \left[\left[\frac{1}{4}, \frac{31}{8}\right], 1\right] $$
Now the solution is searched under the form
$$\frac{u^2c_2+uvb_1+uc_1+vb_0+c_0}{u^2+3\sqrt{2}u - \frac{9}{4}u-\frac{3}{4}\sqrt{2} + \frac{1}{2}} \Theta(\lambda, x, y, u, v)$$
and we find
$$\frac{u-\frac{1}{4}}{u^2+3\sqrt{2}u - \frac{9}{4}u-\frac{3}{4}\sqrt{2} + \frac{1}{2}} \Theta_{1, \frac{1}{4},\frac{31}{8}}(u, v) $$

\noindent
\textbf{Example 2}\\
Consider the equation
$$(168u^3 + 2268u^2v + 2580984u^2 - 4599uv - 5233662u + 192843v + 1890462)f((u,v)\oplus 2.\delta)$$
$$+(23808u^3 + 16u^2v - 2643418u^2 + 16uv + 5287148u - 191840v - 1900306)f((u,v)\oplus \delta)$$
$$-(23976u^3 + 2284u^2v - 62434u^2 - 4583uv + 53486u + 1003v - 9844)f(u,v)=0$$
The splitting field for singularities is $\mathbb{Q}(i\sqrt{3})$, and the universal divisor is $[[\tfrac{1}{4}, \tfrac{31}{8}], 1]$. Two independent pseudo rational solutions are found
$$1,\;  \Theta_{1, \frac{1}{4}, \frac{31}{8}} (u, v)$$

\section{Hyperexponential solutions}

\begin{prop}\label{prophyper}
A hyperexponential solution can be written
$$H(u,v)F(u,v) \Theta_{\lambda,\alpha,\beta}(u,v)$$
where $F(u,v)$ is rational, and $H(u,v)$ is hyperexponential such that $H(p\oplus \delta)/H(p)$ is a rational function whose every root and pole lie on different orbits by $\phi_\delta$.
\end{prop}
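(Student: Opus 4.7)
The plan is to work multiplicatively. Set $r(p) := f(p\oplus\delta)/f(p)$, which is rational by hypothesis. A decomposition $f = HF\Theta_{\lambda,\alpha,\beta}$ corresponds to a factorization $r = r_H\cdot r_F\cdot r_\Theta$, where $r_F = F(p\oplus\delta)/F(p)$ for a rational $F$, $r_\Theta$ is the rational function attached to $\Theta_{\lambda,\alpha,\beta}$ with divisor $-\mathbb{I}_{(\alpha,\beta)\ominus\delta}+\mathbb{I}_{(\alpha,\beta)}+\mathbb{I}_{\ominus\delta}-\mathbb{I}_O$, and $r_H$ is a rational function whose zeros and poles lie on pairwise distinct $\phi_\delta$-orbits. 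Once such an $r_H$ is produced, $H$ is reconstructed orbit by orbit, up to a multiplicative constant that can be absorbed into $\lambda$.

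Next I would decompose $\mathrm{div}(r) = \sum_O D_O$ over the orbits of $\phi_\delta$; only finitely many orbits $O$ contribute. On each contributing orbit $O$, set $n_O := \sum_{p\in O} D_O(p)$, pick a representative $q_O\in O$, and declare the target $D_H|_O := n_O\,\mathbb{I}_{q_O}$. This $D_H$ has at most one point per orbit, and the residue $E := \mathrm{div}(r) - D_H$ is net-zero on every orbit. The split of net multiplicities between factors is consistent, since both $r_F$ and $r_\Theta$ contribute net-zero on every orbit (for $r_\Theta$ this holds separately on the orbit of $(\alpha,\beta)$ and on the orbit of $O$).

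The third step is a local realization lemma: on any single orbit, every net-zero divisor with finite support equals $\phi_\delta^*(D) - D$ for a uniquely determined divisor $D$ on that orbit with finite support, obtained by discrete antidifferentiation (partial sums of the coefficients along the orbit). Applying this orbit by orbit to $E - \mathrm{div}(r_\Theta)$ produces a candidate divisor $D_F$ on $\mathcal{C}$, from which $F$ would be read off as any rational function with $\mathrm{div}(F) = D_F$, provided $D_F$ is principal.

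The hard part, and the crux of the proof, is exactly this last point. Principality of $D_F$ requires $\deg D_F = 0$ and the Abel-Jacobi condition $\sum D_F = O$ in $\mathcal{C}$. The two available degrees of freedom are: the integer translations $q_O\mapsto q_O\oplus\delta$ on each orbit with $n_O\neq 0$, which shift $D_F$ on that orbit in a controlled way; and the choice of $(\alpha,\beta)\in\mathcal{C}$ in the $\Theta$-factor, a full-curve parameter that directly tunes the Abel-Jacobi image of $D_F$. The work is to verify that these two kinds of adjustments jointly surject onto the obstruction group $\mathbb{Z}\times\mathcal{C}$, so that $D_F$ can be forced to be principal. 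Once this is secured, $F$ exists as a rational function with $\mathrm{div}(F)=D_F$, the value of $\lambda$ is fixed by comparing dominant coefficients along an orbit, and the factorization $f = HF\Theta_{\lambda,\alpha,\beta}$ follows.
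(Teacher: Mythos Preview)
Your plan differs from the paper's: you factor the divisor of $r(p)=f(p\oplus\delta)/f(p)$ directly, whereas the paper works with $f$ itself, multiplying by a \emph{product} of $\Theta$-functions to concentrate the valuation jumps of $f$ to a single point on each orbit. The resulting function $g$ has a ``jump divisor'' $D$ with one point per orbit; the paper then reduces $D$ modulo principal divisors to $\mathbb{I}_q-\mathbb{I}_O$, defines $H$ via this reduction, and shows that $\tilde g=g/H$ is pseudo-rational by proving that $q$ must lie on the orbit of $O$. That last step is where the non-torsion of $\delta$ enters: writing the divisor of $\tilde g(p\oplus\delta)/\tilde g(p)$ as $\mathbb{I}_q-\mathbb{I}_O-N\mathbb{I}_{(\alpha,\beta)}+N\mathbb{I}_{(\alpha,\beta)\ominus\delta}$ and imposing principality forces $q=N\cdot\delta$. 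Your divisor-calculus reformulation is cleaner, but it must still pass through this same arithmetic fact.

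The gap in your sketch is exactly the step you call the crux, and it is not merely a verification left to the reader. The $\Theta$-factor contributes $\mathbb{I}_{(\alpha,\beta)}-\mathbb{I}_O$ to $D_F$, which has degree $0$, so varying $(\alpha,\beta)$ moves only the Abel--Jacobi image of $D_F$, never its degree. Shifting $q_O\mapsto q_O\oplus m_O\delta$ changes $\deg D_F$ by $\sum_O n_O m_O$, hence only within $\gcd_O(n_O)\cdot\mathbb{Z}$. Thus your two adjustments reach at most $\gcd_O(n_O)\cdot\mathbb{Z}\times\mathcal{C}$, not all of $\mathbb{Z}\times\mathcal{C}$ as you assert. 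What is missing is that the two obstructions are already linked by the hypothesis: applying Abel--Jacobi to $\mathrm{div}(r)=D_H+\mathrm{div}(r_\Theta)+(\phi_\delta^*D_F-D_F)$ yields $\sum_O n_O\, q_O=(\deg D_F)\cdot\delta$, so $D_H$ is automatically $\delta$-principal. This relation, together with the non-torsion of $\delta$, is the substantive input that must replace your bare surjectivity claim; it is precisely the computation the paper performs in its case analysis on whether $q$ and $O$ share an orbit.
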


The function $H$ plays the role of a product of $\Gamma$ function for the classical shift on $\mathbb{C}$. The main difference is that it is a priori not possible to split each factor to have a product of function for each orbit because all divisors are not principal.

\begin{proof}
Consider $f$ a hyperexponential function and an orbit. Along the orbit, we can encounter several poles and roots of $f(p\oplus \delta)/f(p)$. We can remove a pole or a root by multiplying or dividing by a function $\Theta$. By doing this repeatedly, we can remove finitely many poles and roots, and thus ensure that the valuation along the orbit is constant everywhere except for a single jump. This process can be done for all orbits which have a root or pole of $f(p\oplus \delta)/f(p)$, except the orbit of $O$.

On the orbit of $O$, we can have either finitely many roots/poles, and then by a product of $\Theta$ send them all to $O$. Or there are infinitely many. In this case, we can multiply $f$ by a quotient $\Theta_{\lambda,n.\delta}/\Theta_{\lambda,\alpha,\beta}$ where $(\alpha,\beta)$ such that its orbit $T$ never encounter the orbits with infinitely many roots/poles. The $n.\delta$ is chosen to remove poles or roots on the orbit, such that after finitely many products quotient, the orbit of $O$ has a single jump in valuation. Now the poles and roots on $T$ can also be concentrated to a single point of this orbit by multiplication/division of $\Theta$ functions, a point we note $(\alpha,\beta)$.

We now call $g$ the resulting hyperexponential function. The set of jump points define a divisor $D$, which is not a priori principal nor proper. However, we know that $g(p\oplus \delta)/g(p)$ has a principal proper divisor as it is a rational function. This divisor equals $D-N.\mathbb{I}_{(\alpha,\beta)}+N.\mathbb{I}_{(\alpha,\beta)\ominus \delta}$ where $N$ is the valuation of $g$ at $(\alpha,\beta)$. As this divisor is proper and so is $-N.\mathbb{I}_{(\alpha,\beta)}+N.\mathbb{I}_{(\alpha,\beta)\ominus \delta}$, the divisor $D$ is thus also proper. We can now reduce $D$ modulo principal divisors to a divisor $\mathbb{I}_{q}-\mathbb{I}_{O}$. This reduction is equivalent to divide $g$ by a hyperexponential function $H$ where $H$ is such that $H(p\oplus \delta)/H(p)$ is a rational function whose every root and pole lie on different orbits by $\phi_\delta$. Now the resulting function $\tilde{g}$ has the following properties
\begin{itemize}
\item It is hyperexponential
\item The orbits of $q$ and $O$ are the only ones with infinitely many roots/poles.
\item There is at most two jumps in valuation along orbits, and of amplitude at most $1$.
\item The roots/poles outside the orbits of $q$ and $O$ can only be at $(\alpha,\beta)$.
\end{itemize}

So now two situations occur. Either $q$ and $O$ are on the same orbit. Then the two jumps compensate as the divisor of $\tilde{g}(p\oplus \delta)/\tilde{g}(p)$ has to be proper. And thus the orbit has finitely many poles or roots, and then $\tilde{g}$ is pseudo-rational.

Or $q$ and $O$ are not on the same orbit. Then we look at the divisor of $\tilde{g}(p\oplus \delta)/\tilde{g}(p)$, which writes
$$\mathbb{I}_{q}-\mathbb{I}_{O} -N.\mathbb{I}_{(\alpha,\beta)}+N.\mathbb{I}_{(\alpha,\beta)\ominus \delta} $$
It should be principal, but when performing the computation, we find
$$q\ominus N.(\alpha,\beta)\oplus N.(\alpha,\beta)\ominus N.\delta=q\ominus N.\delta=O$$
thus $q\ominus O=N.\delta$, which is a contradiction as we assumed they were not on the same orbit.

We thus conclude that $\tilde{g}$ is pseudo-rational, and going backwards, that our initial hyperexponential function $f$ can be written $H(u,v)F(u,v) \Theta_{\lambda,\alpha,\beta}(u,v)$
\end{proof}

Remark that in the construction of the function $H$, the specific position of the valuation jump on an orbit is not important. We can always shift these jumps by a multiple of $\delta$ (although this would change the pseudo-rational part $F(u,v) \Theta_{\lambda,\alpha,\beta}(u,v)$). Of course the divisor formed by the jumps is not always principal, and this suggests to define a more relaxed version of principality.

\begin{defi}
We say that a divisor $D$ is $\delta$ principal if the weighted sum of its points for elliptic addition law is a multiple of $\delta$.
\end{defi}

A principal divisor is $\delta$ principal, but so are any divisor built by shifting by $\delta$ arbitrary the points of its support, and thus is a property invariant by $\delta$ shifts.

\begin{prop}\label{propjump}
Consider a hyperexponential solution $f$, and all its orbits such that
$$m_p=\lim_{n\rightarrow +\infty} \hbox{val}_{p\oplus n.\delta} f- \lim_{n\rightarrow -\infty} \hbox{val}_{p\oplus n.\delta} f \neq 0.$$
Then the divisor formed by
$$D=\sum_{\mathcal{O} \in \hbox{Orbits}} m_p \mathbb{I}_p,\quad p\in \mathcal{O}$$
is proper and $\delta$-principal.
\end{prop}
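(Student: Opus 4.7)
The plan is to transfer everything to the rational function $r(p) = f(p\oplus\delta)/f(p)$, whose divisor $\hbox{div}(r)$ is principal and therefore has total degree $0$ and weighted sum $O$ under the elliptic addition law (Abel--Jacobi, already used in the paper). The key identity I would exploit is $\hbox{div}(r)(q) = \hbox{val}_{q\oplus\delta}(f) - \hbox{val}_q(f)$ at every point $q$. Fixing a representative $p$ of an orbit $\mathcal{O}$ and setting $v_n = \hbox{val}_{p\oplus n\delta}(f)$, this identity reads $\hbox{div}(r)(p\oplus n\delta) = v_{n+1}-v_n$, a telescoping difference. Since $r$ is rational, $\hbox{div}(r)$ has finite support; only finitely many $n$ give $v_{n+1}\neq v_n$ on each orbit, which simultaneously guarantees the existence of the limits $v_\pm$ and the well-definedness of $m_p = v_+ - v_-$.

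Properness of $D$ would then be just a degree count: summing the telescoping identity along $\mathcal{O}$ yields $\sum_{q\in\mathcal{O}}\hbox{div}(r)(q) = m_\mathcal{O}$, and summing over all orbits gives $\sum_\mathcal{O} m_\mathcal{O} = \sum_q\hbox{div}(r)(q) = 0$, i.e. $\sum_p D(p) = 0$.

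For $\delta$-principality, I would compute the weighted sum of $\hbox{div}(r)$ orbit by orbit. On the orbit $\mathcal{O}$, the contribution is a genuinely finite sum which rearranges as
\[ \bigoplus_n (v_{n+1}-v_n)\cdot(p_\mathcal{O}\oplus n\delta) = \Bigl(\sum_n(v_{n+1}-v_n)\Bigr)\cdot p_\mathcal{O} \;\oplus\; \Bigl(\sum_n n(v_{n+1}-v_n)\Bigr)\cdot\delta = m_\mathcal{O}\cdot p_\mathcal{O} \;\oplus\; N_\mathcal{O}\cdot\delta, \]
with $N_\mathcal{O}\in\mathbb{Z}$. Only finitely many orbits meet the support of $\hbox{div}(r)$, so $N=\sum_\mathcal{O}N_\mathcal{O}$ is a finite integer, and Abel--Jacobi applied to $\hbox{div}(r)$ as a whole yields $\bigl(\bigoplus_\mathcal{O} m_\mathcal{O}\cdot p_\mathcal{O}\bigr)\oplus N\cdot\delta = O$. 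The weighted sum of $D$ is therefore $\ominus N\cdot\delta \in \mathbb{Z}\cdot\delta$, which is precisely $\delta$-principality.

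The main subtlety I anticipate is bookkeeping: justifying the rearrangement of apparently infinite sums as finite ones through the finite support of $\hbox{div}(r)$, and checking invariance under the choice of representative $p_\mathcal{O}$ on each orbit. A shift $p_\mathcal{O}\mapsto p_\mathcal{O}\oplus k\delta$ changes $N_\mathcal{O}$ by $-k\,m_\mathcal{O}$ while adding $k\,m_\mathcal{O}$ copies of $\delta$ to the orbit contribution, so $m_\mathcal{O}\cdot p_\mathcal{O}\oplus N_\mathcal{O}\cdot\delta$ is invariant and the $\delta$-principality of $D$ is intrinsic to $f$, as required.
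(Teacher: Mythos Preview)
Your argument is correct and is in fact more direct than the paper's. The paper proves Proposition~\ref{propjump} by invoking the construction carried out in the proof of Proposition~\ref{prophyper}: it multiplies $f$ by products of $\Theta$ functions to obtain a normalized hyperexponential $g$ with a single valuation jump per orbit, observes that the jump divisor of $g$ reduces modulo principal divisors to $\mathbb{I}_q-\mathbb{I}_O$ with $q$ a multiple of $\delta$, and then notes that shifting jump points by $\delta$ preserves $\delta$-principality. Properness is inherited from the analogous step inside Proposition~\ref{prophyper}.

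You bypass all of that machinery and work directly with the principal divisor of $r(p)=f(p\oplus\delta)/f(p)$. The telescoping identity $\hbox{div}(r)(p\oplus n\delta)=v_{n+1}-v_n$ immediately gives both the existence of the limits $v_\pm$ and, after summing over each orbit, the identity $\sum_{\mathcal{O}} m_{\mathcal{O}}=\deg\hbox{div}(r)=0$ for properness. Your Abel--Jacobi computation then reads off $\delta$-principality in one line. This route is self-contained, does not rely on Proposition~\ref{prophyper}, and makes transparent why the result is really just the degree-zero and sum-$O$ constraints on a principal divisor, filtered through the orbit decomposition. The paper's approach has the advantage of reusing work already done and of making visible the structural link between the two propositions, but yours is the cleaner standalone proof.
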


\begin{proof}
We follow the proof of Proposition \ref{prophyper} and look at the quantities $m_p$. Changing finitely many roots/poles does not change the $m_p$. Thus the function $g$ as defined before has the same $m_p$. Now $g$ is such that an orbit contains either a single jump, either this is the orbit of $(\alpha,\beta)$ which contains this single point as root/pole. The amplitude of the jump is thus given by $m_p$. We now know that the divisor $D$ of the jumps of $g$ reduces modulo principal divisors to $\mathbb{I}_{q}-\mathbb{I}_{O}$, and that $q$ is on the same orbit as $O$, i.e. $q$ is a multiple of $\delta$. Thus $D$ is $\delta$ principal.

Let us now remark that shifting a jump by $\delta$ in $g$ shifts a point in $D$ by $\delta$. But this new $D$ is still $\delta$ principal. So the $\delta$ principality of
$$\sum_{\mathcal{O} \in \hbox{Orbits}} m_p \mathbb{I}_p,\quad p\in \mathcal{O}$$
does not depend on the choice of the point in an orbit $\mathcal{O}$. As one choice (the divisor $D$) is $\delta$-principal, so are any of these.
\end{proof}

In practice, we can compute bounds for the jump amplitude $m_p$ by counting singularities on orbits. Then for any combination of choice of $m_p$, we build a divisor, and it should be $\delta$-principal. We can test this property, and if true, we only have to translate by a multiple of $\delta$ one of its points to obtain a principal divisor. This will be a suitable choice for the divisor of $H(p\oplus \delta)/H(p)$ in Proposition \ref{prophyper}.

We will test $\delta$-principality for many divisors with the same support. To do it efficiently, we can first compute the regulator matrix of the points of the support $p_1,\dots, p_k$
$$\hbox{Reg}= (<p_i,p_j>)_{i,j=1\dots k}$$
where $<\;,\;>$ is the scalar product defined by the canonical height. We can now define a norm on divisors by
$$\left\Vert \sum n_i \mathbb{I}_{p_i} \right\Vert^2= \frac{1}{<\delta,\delta>} (n_1,\dots,n_k) \hbox{Reg}(n_1,\dots,n_k)^\intercal$$
and then $\Vert D \Vert$ gives the value (up to sign) of the number of shifts of $\delta$ to test the $\delta$ principality of $D$. By computing the matrix norm $\Vert \hbox{Reg} \Vert_2$, we obtain an absolute bound as we can bound $(n_1,\dots,n_k)$ as this is simply the number of singularities of equation \eqref{eqell}.\\

\noindent
\textsf{HyperexponentialSolutions} Input: An equation of the form \eqref{eqell}\\
Output: A basis of hyperexponential solutions of \eqref{eqell}\\
\begin{enumerate}
\item Compute $S=$\textsf{Singularties} of \eqref{eqell}.
\item For each pair of ideals of $S$, check if the field they define are the same. If yes, add the difference of two roots in a set $\Sigma$
\item Compute $b=\lceil\hat{h}(\delta)^{-1/2}\max_{p\in\Sigma} \hat{h}(p)^{1/2}\rceil$
\item For each element $\mathcal{J}$ of $S$ do
\begin{enumerate}
\item Compute $\phi_\delta^{-i}$ of a root of $\mathcal{J}$ for $i=1\dots b$, and check if one of them is a root of one of the ideals of $S$ other than $\mathcal{J}$
\item If no, compute $\phi_\delta^i$ of a root $p$ of $\mathcal{J}$ for $i=1\dots b$ and compute a list $L$ of the positive and minus the negative multiplicities for the ideals $\mathcal{J}_i$ vanishing on $\phi_\delta^i(p)$.
\item Compute the list $I_{\mathcal{J}_i}$ of integers from $-\sum_{i=1}^b L_{i,2}$ to $\sum_{i=1}^b L_{i,1}$.
\end{enumerate}
\item Note $L$ the list formed by roots of $\mathcal{J}$ with their associated list $I_{\mathcal{J}}$.
\item Compute $b=\left\lceil\Vert \hbox{Reg} \Vert_2 \sharp L /\sqrt{<\delta,\delta>} \right\rceil$ where $\hbox{Reg}$ is the regulator matrix of the points of $L$.
\item For each combination of integers in the second part of $L$ do
\begin{enumerate}
\item Build the divisor $D$ with the chosen multiplicities.
\item Test principality of $D$ and its shifts of one of its point by $N.\delta$ for $ \mid N \mid \leq b $.
\item If one of it is principal, note it $\tilde{D}$, find $R$ rational realizing this divisor, and change the unknown function $f$ in equation \eqref{eqell} by multipying it by a hyperexponential function $H$ defined by $R$. Note $\hbox{Eq}$ this new equation.
\item Compute pseudo-rational solutions of $\hbox{Eq}$, and multiply them by $H$
\end{enumerate}
\item Return the list of all solutions found
\end{enumerate}

The list $L$ contains all the roots of the ideals $\mathcal{J}_i$ not satisfying step $5a$. This requires to computed a splitting field containing all these roots, which can be very large. Remark that the regulator computation and the $\delta$ principality test is in principle not required, but it allows to throw up combinations without testing for pseudo-rational solutions. Indeed, even if $\tilde{D}$ is not principal, we can add a point to it to build $R$, and by Proposition \ref{prophyper}, the additional singularity (if this is really a combination for a hyperexponential solution) should be on the same orbit as $O$, and then will still reduce to a pseudo-rational solution after division by $H$.

\begin{proof}[Proof of Theorem \ref{thm1} for hyperexponential solutions]
If $f$ is a hyperexponential solution, with Propositions \ref{prophyper}, \ref{propjump}, we know that we can write
$$f(u,v)=H(u,v)F(u,v) \Theta_{\lambda,\alpha,\beta}(u,v)$$
where $H$ is hyperexponential with single valuation jumps on orbits. Steps $1,2,3$ compute the roots of $a_0,a_l$ and dispersion bound $b$. Then for each maximal ideal representing a root, we check it is the first on its orbit (step $4a$), and if yes compute all singularities encountered along the orbit (step $4b$). Then all roots multiplicities of $a_l$ encountered are added, and all roots multiplicities of $a_0$ encountered are added, and a list $I_{\mathcal{J}_i}$ of all integers between them is built (step $4c$). These integers are all the possible valuation change along the orbit, and as no further change can occur before or after the region studied (because of the bound $b$ on the dispersion), we know that the quantity
$$m_p=\lim_{n\rightarrow +\infty} \hbox{val}_{p\oplus n.\delta} f- \lim_{n\rightarrow -\infty} \hbox{val}_{p\oplus n.\delta} f$$
is among them.

In step $5$, the list of roots is build by computing a common splitting field for all the ideals $\mathcal{J}$. So now the list $L$ contains all possible points (which are all on different orbits) for which $H$ can admit infinitely many roots/poles, and the lists of integers contain all the possible valuation jump on these orbits. In step $7$, we consider one by one every possible combination. Step $7a$ build the divisor, step $7b$ checks its $\delta$-principality (using the regulator computed in step $6$), and if yes, step $7c$ finds a divisor $\tilde{D}$ which equals $D$ except for a suitable shifted point, which makes it principal. Then a rational function $R$ is computed, which is the candidate for $H(p\oplus \delta)/H(p)$, and equation \eqref{eqell} is transformed accordingly. In step $7d$, if the multiplicities were well chosen (every possible combination has to be tested), then $f(p)/H(p)$ will have finitely many roots/poles, and thus will be pseudo-rational. Then algorithm \textsf{PseudoRationalSolutions} will find it. Step $8$ returns all solutions found.

Let us check that the output is indeed a basis. For each valuation jump combination, the solutions are linearly independent as those in the output of \textsf{PseudoRationalSolutions} are. If we consider two solutions with different valuation jumps, then along an orbit with different jump values, the poles/roots far enough on the orbit would differ, and thus no linear relation is possible. Thus all solutions returned are linearly independent, and as all hyperexponential are found, this indeed forms a basis.
\end{proof}

Complexity of this algorithm is in theory horrible. The splitting field can be very large, the combinatorics part also, and only these two combined cost at worst $2^d d!$ where $d$ is the number of roots of $a_0a_l$. Remark that even for the classical shift in $\mathbb{C}$, the same difficulties occur cite, and only practical improvements have been made cite. In practice, for small examples with degrees $\leq 4$, this is still manageable.\\

\noindent
\textbf{Example 1}\\
$$uf((u,v)\oplus \delta)-(u-1) f(u,v)=0$$
This equation admits by construction a hyperexponential solution, as it is of order $1$. Let us look nonetheless how the algorithm works. The splitting field is $\mathbb{Q}(\sqrt{15})$ and the singularities found are
$$\left[\left[0, \sqrt{15}\right], -1\right], \left[\left[0, -\sqrt{15}\right], -1\right], \left[\left[1, 4\right], 1\right], \left[\left[1, -4\right], 1\right].$$
This singularity set contains $3$ orbits
$$\mathcal{O}_1: \hbox{first point } (0, \sqrt{15}), \hbox{ valuation jump } \in \{-1,0\}$$
$$\mathcal{O}_2: \hbox{first point } (0,-\sqrt{15}), \hbox{ valuation jump } \in \{-1,0\}$$
$$\mathcal{O}_1: \hbox{first point } (1,-4), \hbox{ valuation jump } \in \{0,1,2\}$$
Thus $2\times 2\times 3$ combinations have to be tested. For each one $\delta$ principality is tested, and two of them can be realized
$$[],\;\; H_1=1$$
$$\left[\left[\frac{122880-23984\sqrt{15}}{14161}, \frac{46049280-14043481\sqrt{15}}{1685159}\right], -1\right], \left[\left[0, -\sqrt{15}\right], -1\right], \left[\left[1, -4\right], 2\right]$$
$$H_2=119(8\sqrt{15}+29)\frac{24u\sqrt{15}+64v\sqrt{15}-119u^2+232\sqrt{15}+151u-232v-960}{8(14161u-122880+23984\sqrt{15})u)}$$
Remark that there is not $(u-1)/u$. Its divisor is equivalent up to shift in $\delta$ to the divisor of $H_2$. We see that the point $(0, \sqrt{15})$ has been shifted two times. For obtaining $(u-1)/u$, it would have been necessary to shift $(1, -4)$ instead. But this is a perfectly possible choice, as a rational function and $\Theta$ function will compensate for this. A hyperexponential solution is indeed found with the choice $H_2$
$$\hbox{Hyper}\left(119(8\sqrt{15}+29)\frac{24u\sqrt{15}+64v\sqrt{15}-119u^2+232\sqrt{15}+151u-232v-960}{8(14161u-122880+23984\sqrt{15})u)}\right)$$
$$\Theta_{\frac{226432\sqrt{15}-878888}{14161}, \frac{122880+23984\sqrt{15}}{14161},\frac{-14043481\sqrt{15} - 46049280}{1685159}}(u,v)$$
$$\frac{464u\sqrt{15}+55v\sqrt{15}-119u^2-244\sqrt{15}+1920u+244v - 825}{u - 1}$$\\

\noindent
\textbf{Example 2}\\
$$(3u-8v+29)(u-1)^2 f((u,v)\oplus 2.\delta)+(u-6v+23)(141u^2-10uv-98u-374v+1493)f(u,v)=0$$
The singularities are
$$\left[\left[-\frac{119}{64}, \frac{1499}{512}\right], -1\right], \left[\left[1, -4\right], -2\right], \left[\left[109, 1138\right], 1\right], \left[\left[\frac{1}{4}, \frac{31}{8}\right], 1\right],$$
$$ \left[\left[-\frac{11}{9}, \frac{98}{27}\right], 1\right], \left[\left[\frac{1201}{100}, \frac{41801}{1000}\right], 1\right], \left[[], -1\right]$$
This singularity set contains $3$ orbits
$$\mathcal{O}_1: \hbox{first point } \left(-\frac{119}{64}, \frac{1499}{512}\right), \hbox{ valuation jump } \in \{-4,-3,-2,-1,0\}$$
$$\mathcal{O}_2: \hbox{first point } (109, 1138), \hbox{ valuation jump } \in \{0,1,2\}$$
$$\mathcal{O}_1: \hbox{first point } \left(\frac{1201}{100}, \frac{41801}{1000}\right), \hbox{ valuation jump } \in \{0,1,2\}$$
Thus $5\times 3\times 3$ combinations have to be tested. For each one $\delta$ principality is tested, and three of them can be realized
$$1,\; \frac{-2560u^2-87328u+35136v-50656}{35136u^2+30195u-65331},$$
$$\frac{179956361-1386240u^3+51200u^2v-45603220u^2+12713312uv-46903105u+63907820v}{\tfrac{25}{128}(64u+119)^3}$$
For the second one, two pseudo rational solution are then found
$$\hbox{Hyper}\left(\frac{-2560u^2-87328u+35136v-50656}{35136u^2+30195u-65331} \right)\Theta_{\frac{549}{32},O}(u, v) \frac{u-1}{-\frac{u}{6} + v - \frac{23}{6}},$$
$$\hbox{Hyper}\left(\frac{-2560u^2-87328u+35136v-50656}{35136u^2+30195u-65331} \right)\Theta_{-\frac{549}{32},O}(u, v)\frac{u-1}{-\frac{u}{6} + v - \frac{23}{6}}$$

\section{Conclusion}

The algorithms are implemented in Maple, except for the computation of the canonical height done in PARI/GP. The Maple implementation is done by assuming that $\mathbb{K}$ contains all the roots of $a_la_0$, which is always possible to assume by considering their splitting field. This is normally only necessary for the computation of hyperexponential solutions, but the implementation is easier as some properties are trivialized (as Proposition \ref{propideals}), and the code works in practice only on small examples anyway. One of the difficulties is that shifting an integer by $1$ hardly grows its bit size (logarithmic growth), contrary to the shift on an elliptic curve for which the size growth is quadratic (except for a $\delta$ of torsion, which is forbidden). Thus rapidly coefficient size explodes, and even simple examples are difficult. Also, the $\Theta$ function parameters appear non linearly in the equation, and it would be interesting to know if, using the a priori knowledge on the solutions (a continuum in the parameter space is impossible), we could have a better approach than Groebner basis computation we used. These $\Theta$ functions can be algebraic, which is a novelty in comparison to the classical shift on $\mathbb{C}$. As in the classical shift on $\mathbb{C}$, ramification points for solutions are forbidden, and these functions indeed always have a divisor with integer coefficients, but they have a finite global monodromy which is possible as a complex elliptic curve is a torus and so has non trivial homology. Finally, the $\delta$-principality test is intriguing. This does not appear in the classical shift on $\mathbb{C}$, as then every combination of exponents has to be tested. Here, a purely algebraic property on divisors (thus independent on the difference equation) is required for a combination to be a priori possible. The examples suggest that it is rarely satisfied even for examples carefully tuned. Thus the combinatorial part could possibly be avoided by some kind of LLL algorithm using the addition law on elliptic curves.

\label{}
\bibliographystyle{plain}


\end{document}